\theoremstyle{plain}
\newtheorem{theorem}{Theorem}
\newtheorem{proposition}[theorem]{Proposition}
\newtheorem{lemma}[theorem]{Lemma}
\newtheorem{corollary}[theorem]{Corollary}
\newtheorem{remark}[theorem]{Remark}
\newtheorem*{proof}{Proof}
\newcommand{\te}{{\tilde{\eta}}}
\newcommand{\ve}{\varepsilon}
\newcommand{\req}[1]{\eqref{eq:#1}}
\newcommand{\R}{\field{R}}
\newcommand{\N}{\field{N}}
\newcommand{\Lp}[1]{{L_\diamond^{#1}(\Omega)}}
\newcommand{\abs}[1]{\left| #1 \right|}
\newcommand{\dist}[2]{{\text{dist}\left(#1,#2\right)}}
\newcommand\set[1]{\left\{ #1 \right\}}
\newcommand{\norm}[1]{\left\| #1 \right\|}
\newcommand{\Reg}[2]{{\cal R}_{#1}^{#2}}
\newcommand{\Fun}[2]{{\cal F}_{#1}^{#2}}
\DeclareMathOperator{\sgn}{sgn}
\newcommand{\fd}{{f^\delta}}
\newcommand{\field}[1]{\ensuremath{\mathbb{#1}}}
\newcommand{\NextScriptStyle}[1]{{\scriptstyle{#1}}}
\newcommand{\NextScriptScriptStyle}[1]{{\scriptscriptstyle{#1}}}
\newcommand{\NextTextStyle}[1]{{\textstyle{#1}}}
\newcommand{\NextDisplayStyle}[1]{{\displaystyle{#1}}}
\newcommand{\SwitchBracketsizeLeft}[1]{
  \ifthenelse{\equal{#1}{b}\OR\equal{#1}{big}}{\let\LeftBracketSize=\bigl}{
    \ifthenelse{\equal{#1}{B}\OR\equal{#1}{Big}}{\let\LeftBracketSize=\Bigl}{
      \ifthenelse{\equal{#1}{g}\OR\equal{#1}{bigg}}{\let\LeftBracketSize=\biggl}{
     \ifthenelse{\equal{#1}{G}\OR\equal{#1}{Bigg}}{\let\LeftBracketSize=\Biggl}{
      \ifthenelse{\equal{#1}{s}\OR\equal{#1}{small}}{\let\LeftBracketSize=\NextScriptStyle}{
        \ifthenelse{\equal{#1}{ss}}{\let\LeftBracketSize=\NextScriptScriptStyle}{
          \ifthenelse{\equal{#1}{t}\OR\equal{#1}{text}}{\let\LeftBracketSize=\NextTextStyle}{
         \ifthenelse{\equal{#1}{d}\OR\equal{#1}{display}}{\let\LeftBracketSize=\NextDisplayStyle}{
          \ifthenelse{\equal{#1}{a}\OR\equal{#1}{auto}}{\let\LeftBracketSize=\left}{
            \let\LeftBracketSize=\relax}}}}}}}}}}
\newcommand{\SwitchBracketsizeRight}[1]{
  \ifthenelse{\equal{#1}{b}\OR\equal{#1}{big}}{\let\RightBracketSize=\bigr}{
    \ifthenelse{\equal{#1}{B}\OR\equal{#1}{Big}}{\let\RightBracketSize=\Bigr}{
      \ifthenelse{\equal{#1}{g}\OR\equal{#1}{bigg}}{\let\RightBracketSize=\biggr}{
     \ifthenelse{\equal{#1}{G}\OR\equal{#1}{Bigg}}{\let\RightBracketSize=\Biggr}{
      \ifthenelse{\equal{#1}{s}\OR\equal{#1}{small}}{\let\RightBracketSize=\NextScriptStyle}{
        \ifthenelse{\equal{#1}{ss}}{\let\RightBracketSize=\NextScriptScriptStyle}{
          \ifthenelse{\equal{#1}{t}\OR\equal{#1}{text}}{\let\RightBracketSize=\NextTextStyle}{
         \ifthenelse{\equal{#1}{d}\OR\equal{#1}{display}}{\let\RightBracketSize=\NextDisplayStyle}{
          \ifthenelse{\equal{#1}{a}\OR\equal{#1}{auto}}{\let\RightBracketSize=\right}{
            \let\RightBracketSize=\relax}}}}}}}}}}
\newcommand{\logmessage}[1]{\@latex@warning{#1}}
\newcommand{\ignore}{\logmessage{Text ignored}\@gobble}
\title{A Derivative Free Approach for Total Variation Regularization}
\author{Carsten Pontow${}^2$ \quad Otmar Scherzer${}^{1,3}$\\
{ \footnotesize
\begin{tabular}{ccc}
\hbox to 0pt{\hss ${}^1$}Computational Science Center &
\hbox to 0pt{\hss ${}^2$}Department of Mathematics &
\hbox to 0pt{\hss ${}^3$}RICAM \\
University of Vienna & University of Innsbruck & Radon Institute\\
Nordbergstrasse~15 & Technikerstrasse~21a &  Altenbergerstrasse~69\\
1090 Wien, Austria & 6020 Innsbruck, Austria &  4040 Linz, Austria\\
\end{tabular}} }
\begin{document}

\maketitle

\begin{abstract}
The goal of this paper is to present a novel approach for total variation regularization and Sobolev minimization, which
are prominent tools for variational imaging. Thereby we use derivative free characterizations of the total variation semi-norm
and Sobolev semi-norms of functions recently derived by Bourgain, Br\'ezis, Mironescu and D\'avila.
Their analysis is to approximate the semi-norms of a function by singular integral operators. With this characterization we derive a
series of novel regularization methods for total variation minimization which have as a novel feature a non-local
double integral regularization term.
\end{abstract}

\section{Introduction}
\medskip
Given noisy image data $\fd$, \emph{total variation denoising} (see \cite{RudOshFat92}) consists in minimization of the functional
\begin{equation*}
   \Fun{}{1}(f) :=  \frac{1}{2} \int_\Omega (f-\fd)^2(x)\,dx + \alpha |Df|\;.
\end{equation*}
The minimizer is a smoothed approximation of $\fd$.
In the above functional $|Df|$ denotes the total variation seminorm of $f$ and $\alpha >0$ is a positive constant.
The first summand of the functional above is called the \emph{fidelity} term and penalizes the deviation of an image $f$ from the data $\fd$. The second summand is named regularization term and penalizes the rate of change within $f$. While the first summand provides that the outcome of the minimization process (the denoised image) preserves similarity to $\fd$, the second term is intended to reduce the oscillations within the argument $f$ in order to generate an approximation to $\fd$ that is free of the inherent noise. In fact, this strategy has proven to be successful and even more, the total variation seminorm has proven to be superior to other regularization terms penalizing the rate of change in the sense that edges within the image are preserved and not blurred.

Another well-known choice for the regularization term is for $1 < p < \infty$ the $p$-th power of the Sobolev $(1,p)$-seminorm $|\cdot|_{1,p}$  given by
\[
   |f|_{1,p} := \left(\int_\Omega |\nabla f(x)|^p \; dx\right)^\frac{1}{p}.
\]
 Exchanging it with the TV-seminorm in the above functional yields the functional
\begin{equation*}
   \Fun{}{p}(f) :=  \frac{1}{2} \int_\Omega (f-\fd)^2(x)\,dx + \alpha |f|^p_{1,p}.
\end{equation*}

In this paper we are concerned with a analytical approach to approximate the functionals $\Fun{}{p}$ in a variational sense.
Recently, new {\bf derivative free characterizations} of the Sobolev spaces $W^{1,p}$ and the space $BV$ of functions of bounded total variation function have been obtained by Bourgain, Br\'ezis and Mironescu \cite{BouBreMir00} and D\'avila \cite{Dav02} -- This work has been refined and supplemented by Ponce in \cite{Pon04b}. These characterizations provided new derivative-free approximations of the $(1,p)$- and total variation seminorms that are obtained by approximating the respective seminorm of a function $f$ by double integrals over the $p$-th power of the difference quotient function of $f$ multiplied with weighting kernel functions that form an approximation the identity.

In detail, let for $1 \leq p <\infty$ and any measurable function $f$
\begin{equation}
\label{eq:reg}
\Reg{n}{p}(f)  := \int_\Omega \int_\Omega \frac{\abs{f(x) - f(y)}^p}{\abs{x-y}^p} \varphi_n(x-y)\,dx\,dy\,.
\end{equation}
The functions $\varphi_n$  are   non-negative, radially symmetric, and radially decreasing functions from $L^1(\R^N)$ satisfying that for every $\delta > 0$
\begin{equation}
\label{eq:varphi1}
 \lim_{n \to \infty} \int_{\set{x : \abs{x} > \delta}} \varphi_n(x)\,dx = 0
\end{equation}
and for all $n \in \N$
\begin{equation}
\label{eq:varphi2}
\int_\Omega \varphi_n(x)\,dx  =1\,.
\end{equation}
Conditions (\ref{eq:varphi1}) and (\ref{eq:varphi2}) imply that the unit mass of the functions $\varphi_n$ concentrates around the origin as $n$ strives to infinity.

If $1 < p <\infty$, then by \cite[Theorem 2]{BouBreMir00} there exist real constants $K_{p,N}$ such that
for every measurable function $f$ defined on $\Omega$
\begin{equation*}
\frac{1}{K_{p,N}} \lim_{n \to \infty} \Reg{n}{p} (f) =
 \int_\Omega \abs{\nabla f(x)}^p\,dx  = \abs{f}_{1,p}^p\,,
\end{equation*}
where the limit is $+\infty$ if $f$ does not belong to $W^{1,p}(\Omega)$.
For the space of functions with finite total variation a similar characterization with $p=1$
in the integral expression holds true \cite{Dav02}:
\begin{equation*}
 \frac{1}{K_{1,N}}   \lim_{n \to \infty} \Reg{n}{1} (f) = \abs{Df}.
\end{equation*}

These approximations of the seminorms give rise to the following approximations of the functionals $\Fun{}{p}$ from above. Let for $1 \leq p < \infty$
\begin{equation}
\label{eq:fn}
  \Fun{n}{p}(f) := \frac{1}{2} \int_\Omega (f -\fd)^2(x)\,dx + \frac{\alpha}{K_{p,N}} \Reg{n}{p} (f).
\end{equation}

In this paper we give a variational analysis of the approximation  of the functionals $\Fun{}{p}$ by the functionals $\Fun{n}{p}$ for $n \to \infty$. In particular, we  will show that all of the approximating functionals $\Fun{n}{p}$ have a unique minimizer $f_n$ and that the sequence of minimizers $(f_n)$ has an accumulation point $f$ that is indeed the unique minimizer of the limit functional $\Fun{}{p}$. That is, the minimizers of $\Fun{n}{p}$ approximate the minimizer of $\Fun{}{p}$.
Most of these results are applications of results found in \cite{BouBreMir00}, \cite{Dav02} and \cite{Pon04b}.

Further, we show how the regularization functional $\Reg{n}{1}$ evaluates for some important examples. In this way, taking into account the above mentioned
result of the paper, we derive a series of numerical schemes for total variation minimization. In fact well-known numerical methods for total variation minimization can be derived, as well as a series of new ones.
In future work this approach could perhaps be used to clarify the relationships between existing and seemingly unrelated numerical
and analytical approaches or to supply an analytical basis for existing numerical schemes.
A particular consequence of our approach is that by the used approximation it turns out that total variation minimization can be
considered a \emph{bilateral filtering} \cite{TomMan98}.
There is still an ongoing discussion on comparing qualities of total variation denoising and bilateral filtering numerically (see e.g. \cite{PizDidBauWei07}). The results of this paper shed some additional light on this topic from an analytical point of view.
Another issue is that, in contrast to total variation minimization, the new functionals do not require the differentiability of the total variation
measure, and thus the derivation of numerical schemes can be considered in a function setting.

\bigskip \textbf{Notations.}
We summarize some further general assumptions and notations that will be used throughout the rest of this paper.
The set $\Omega \subseteq \R^N$ is a bounded open set with $C^1$-boundary.
The symbol $\mathcal{H}^{N-1}$ denotes the $(N-1)$-dimensional Hausdorff-measure in $\R^N$. The symbol $\mathcal{L}$ denotes the $N$-dimensional Lebesgue measure in $\R^N$.

For a real function $f$ as usual $f_+ := \max(f,0)$ and $f_- := -\min(f,0)$ symbolize the positive and negative parts of $f$, respectively.

The letter $p$ is always used as integration index and always satisfies $1 \leq p < \infty$. Sometimes the range of $p$ will be further restricted. The letter $p^*$ denotes the dual index to $p$ and is given by
\[
 \frac{1}{p} + \frac{1}{p^*} = 1
\]
if $p > 1$ while $p^* = \infty$ if $p = 1$. We denote by $\norm{f}_p$ the $L^p$-norm of $f$ on $\Omega$.
The space of $L^p$-functions on $\Omega$ with mean value zero is symbolized by
\[
 L^p_\diamond(\Omega) :=  \left\{f \in L^p(\Omega) \; : \; \int_\Omega f(x) \; dx = 0 \right\}.
\]
The mean value of an integrable function $f$ on $\Omega$ is symbolized by
\[
 f_\Omega := \frac{1}{\mathcal{L}(\Omega)} \int_\Omega f(x) \; dx.
\]

The characteristic function of a set $A \subset \R^N$ is denoted by $\chi_A$.

Let $C_c^\infty(\Omega)$ be the space of infinitely differentiable functions from $\Omega$ to $\R$ with compact support. For the $TV$-seminorm of a locally integrable function $f$ we use the symbol
 \[
  |Df| = \sup\left\{\int_\Omega f(x) \nabla \cdot \psi(x) \; dx \; : \; \psi \in C_c^\infty(\Omega;\R^n), |\psi(x)| \leq 1 \; \mathrm{for} \; \mathrm {all} \; x \in \Omega\right\}.
 \]
The space of functions of bounded variation on $\Omega$ is the set
\[
  BV(\Omega) := \{f \in L^1(\Omega) \; : \; |Df| < \infty\}.
\]

The constants $K_{p,N}$ are defined by
\[
  K_{p,N} = \left\{ \begin{array}{cc}
 \frac{1}{{\cal{H}}^{N-1}(S^{N-1})} \int_{S^{N-1}}  \left| \left<e, \sigma\right> \right|^p \; d\mathcal{H}^{N-1}(\sigma) & \mathrm{if } \;  N > 1,\\
          &  \\
 1 & \mathrm{if } \; N = 1.
                \end{array} \right.
\]

The radial and radial decreasing functions $\varphi_n$ give rise to the monotone decreasing functions  $\tilde{\varphi}_n$ on $(0,\infty)$ defined for all $r >0$ by
\[
 \tilde{\varphi}_n(r) :=  \varphi_n(x)
\]
 where $x$ is a point  in $\R^N$ with $|x| = r$.

Let
\begin{eqnarray*}
 {\cal{S}}: L^1(\Omega) & \to &\R \cup \{+ \infty\}\\
           f &\mapsto &\frac{1}{2}\|f - \fd\|_2^2
\end{eqnarray*}
denote the first summand of the functional $\Fun{n}{p}$.
The functional attains the value $+\infty$ if $f-\fd$ not in $L^2$.

\section{Variational Analysis}
Let $1 \leq p < \infty$. In the following we prove existence and uniqueness of a minimizer of the functional $\Fun{n}{p}$ defined in \req{fn}.

We begin with the following Lemma.

\begin{lemma}
\label{le:regu}
 Let $f$ be a real measurable function that does not belong to $L^p(\Omega)$. Then
\[
\Reg{n}{p} (f) = +\infty.
\]
\end{lemma}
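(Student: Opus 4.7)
My approach would be to argue by contrapositive: assume $\Reg{n}{p}(f) < \infty$ and deduce that $f \in L^p(\Omega)$. The underlying idea is that, for fixed $n$, the kernel $\varphi_n(x-y)/|x-y|^p$ admits a uniform positive lower bound on a neighborhood of the diagonal of $\Omega\times\Omega$, which reduces the task to obtaining local $L^p$ control on $f$ that can then be glued together.

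First I would extract such a neighborhood. Since $\varphi_n$ is radially symmetric and radially decreasing with $\int_\Omega \varphi_n\,dx = 1 > 0$, the radial profile $\tilde{\varphi}_n$ cannot vanish on every neighborhood of the origin, so one can pick $r_n > 0$ with $c_n := \tilde{\varphi}_n(r_n) > 0$; then $\varphi_n(z) \geq c_n$ whenever $|z| \leq r_n$. Consequently $\varphi_n(x-y)/|x-y|^p \geq c_n/r_n^p$ on $\{(x,y)\in\Omega\times\Omega : |x-y|\leq r_n\}$, so restricting the integral in \req{reg} to this set yields
\[
\int_\Omega \int_{\Omega \cap B(x,r_n)} |f(x)-f(y)|^p\, dy\, dx \;\leq\; \frac{r_n^p}{c_n}\, \Reg{n}{p}(f) \;<\; \infty.
\]

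By Fubini, the inner integral is finite for a.e.\ $x \in \Omega$. Combining this with the elementary bound $|f(y)|^p \leq 2^{p-1}(|f(x)|^p + |f(x)-f(y)|^p)$ and the finiteness of $f(x)$ gives
\[
\int_{\Omega \cap B(x,r_n)} |f(y)|^p\, dy \;<\; \infty \quad \text{for a.e.\ } x \in \Omega.
\]
To globalize this, I would use that $\Omega$ is bounded: cover $\overline{\Omega}$ by finitely many balls $B(z_i, r_n/2)$, $i = 1,\ldots, K$, and in each intersection $B(z_i, r_n/2)\cap \Omega$---which has positive measure---pick $x_i$ from the full-measure set of points for which the previous estimate holds. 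The triangle inequality then implies $\Omega \subseteq \bigcup_{i=1}^K B(x_i, r_n)$, and summing the local $L^p$ bounds yields $f \in L^p(\Omega)$, contradicting the hypothesis.

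The only step I expect to require some care is the extraction of $c_n$: one must combine the radial monotonicity of $\varphi_n$ with the non-triviality encoded in \req{varphi2} to rule out degenerate profiles (for instance, $\varphi_n$ concentrated entirely at the origin in a singular fashion). Everything else is a routine Fubini argument together with the $L^p$ triangle inequality and a standard compactness covering of $\overline{\Omega}$.
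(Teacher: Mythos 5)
Your argument is correct, and it is essentially the contrapositive of the paper's proof built from the same two ingredients: the lower bound $\varphi_n(x-y)/|x-y|^p \geq \tilde{\varphi}_n(r_n)/r_n^p$ on the strip $\{|x-y|\leq r_n\}$ near the diagonal (the extraction of $r_n$ with $\tilde{\varphi}_n(r_n)>0$ is exactly the paper's first step, and is unproblematic since $\varphi_n\in L^1$ has integral one over $\Omega$), and a finite cover of $\overline{\Omega}$ by balls of radius comparable to $r_n$. The difference is in the direction of the bookkeeping: the paper argues directly, using the cover and a pigeonhole to localize the infinite mass of $|f|^p$ into a single positive-measure set $M$ with $\diam(M)<r$, and then shows the double integral over $M\times M$ already diverges because $f-f(x)\notin L^p(M)$ for a.e.\ $x\in M$; you instead assume $\Reg{n}{p}(f)<\infty$, use Tonelli and the bound $|f(y)|^p\leq 2^{p-1}\left(|f(x)|^p+|f(x)-f(y)|^p\right)$ to get $\int_{\Omega\cap B(x,r_n)}|f|^p<\infty$ for a.e.\ $x$, and glue finitely many such local bounds. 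Your gluing step buys a slightly cleaner statement ($\Reg{n}{p}(f)<\infty \Rightarrow f\in L^p(\Omega)$, with no pigeonhole needed), at the cost of the local-to-global covering argument; one cosmetic point is that in the cover you should discard balls $B(z_i,r_n/2)$ not meeting $\Omega$ before asserting positive measure of the intersections, which does not affect the covering of $\Omega$ by the balls $B(x_i,r_n)$.
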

\begin{proof}

By the hypothesis
\[
   \int_\Omega |f(x)|^p \; dx = + \infty.
\]

By the assumptions on $\varphi_n$ there must exist a radius $r > 0$ with $\tilde{\varphi}(r) > 0$ and $\tilde{\varphi}(s) \geq \tilde{\varphi}(r)$ for all $s < r$.

Since $\Omega$ is bounded $\bar{\Omega}$ can be covered by finitely many balls of radius $r$, and there must exist a measurable subset $M$ of $\Omega$ with positive measure satisfying
\[
 \int_M |f(x)|^p \; dx = + \infty \quad \mathrm{and} \quad \mathrm{diam}(M) < r.
\]

 It follows that for any $\beta \in \R$ the function $f - \beta \notin L^p(M)$; in particular $f - f(x) \notin  L^p(M)$ for almost all $x \in M$.

Now we have
\begin{equation*}
 \begin{aligned}
\Reg{n}{p} (f) \geq &
\int_M \int_M \frac{|f(y) - f(x)|^p}{\abs{x-y}^p} \varphi_n(x-y) \; dy dx \\
\geq &
\frac{1}{r^p}\int_M \int_M {|f(y) - f(x)|^p} \varphi_n(x-y) \; dy dx
  \end{aligned}
\end{equation*}
and by monotonicity
\begin{equation*}
\begin{aligned}
 \frac{1}{r^p} \int_M \int_M {|f(y) - f(x)|^p} \varphi_n(x-y) \; dy dx
\geq &
\frac{\tilde{\varphi}_n(r)}{r^p} \int_M \int_M {|f(y) - f(x)|^p}\; dy dx. \\
\end{aligned}
\end{equation*}

But as inferred above for almost all $x \in M$
\[
 \int_{M} |f(y) - f(x)|^p \; dy  = +\infty.
\]
Thus,
\[
 \int_M \int_{M} |f(y) - f(x)|^p \; dy dx = +\infty,
\]
and by the above estimations the proof is complete.
\end{proof}

We continue with the following lemma about the weak lower semicontinuity of the regularization functional.

\begin{lemma}
\label{le:lsc}
Let $1 \leq p,q < \infty$. For all $n \in \N$, $\Reg{n}{p}$ is weakly lower semicontinuous on $L^q(\Omega)$, that is,
\begin{equation*}
 \Reg{n}{p}(g) \leq  \liminf_{k \to \infty} \Reg{n}{p}(g_k)
\end{equation*}
for every sequence $(g_k) \in L^q(\Omega)$ that converges weakly with respect to the $L^q$-topology to a function $g \in L^q(\Omega)$.
\end{lemma}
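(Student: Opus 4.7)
The plan is to prove weak lower semicontinuity via the classical route: convexity plus strong lower semicontinuity. Both ingredients are straightforward to establish for the integral functional $\Reg{n}{p}$, and then Mazur's lemma (equivalently, the fact that a convex set is weakly closed iff it is strongly closed) delivers the weak version.

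First I would verify convexity of $\Reg{n}{p}$ on $L^q(\Omega)$. For fixed $(x,y) \in \Omega \times \Omega$ with $x \neq y$, the map $f \mapsto f(x) - f(y)$ is linear, so $f \mapsto |f(x) - f(y)|^p$ is convex since $t \mapsto |t|^p$ is convex on $\R$. Multiplying by the non-negative weight $\varphi_n(x-y)/|x-y|^p$ and integrating over $\Omega \times \Omega$ preserves convexity, so $\Reg{n}{p}$ is convex.

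Next I would show strong lower semicontinuity on $L^q(\Omega)$. Let $g_k \to g$ in $L^q(\Omega)$. To prove $\Reg{n}{p}(g) \leq \liminf_k \Reg{n}{p}(g_k)$, I pass to a subsequence (still denoted $g_k$) realizing the liminf, and then extract a further subsequence $g_{k_j}$ converging to $g$ pointwise a.e.\ on $\Omega$. By Fubini, the set $\{x : g_{k_j}(x) \not\to g(x)\}$ is $\mathcal{L}$-null, so the exceptional set for $(x,y)$ is $\mathcal{L}\times\mathcal{L}$-null in $\Omega \times \Omega$; consequently
\[
\frac{|g_{k_j}(x) - g_{k_j}(y)|^p}{|x-y|^p}\,\varphi_n(x-y) \longrightarrow \frac{|g(x) - g(y)|^p}{|x-y|^p}\,\varphi_n(x-y)
\]
pointwise a.e.\ on $\Omega \times \Omega$. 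The integrands being non-negative, Fatou's lemma gives
\[
\Reg{n}{p}(g) \leq \liminf_{j \to \infty} \Reg{n}{p}(g_{k_j}) = \liminf_{k \to \infty} \Reg{n}{p}(g_k),
\]
which is the strong lsc property.

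Finally I would invoke the standard convex analysis fact that a convex, strongly lower semicontinuous functional on a Banach space is weakly lower semicontinuous: the sublevel sets $\{f : \Reg{n}{p}(f) \leq c\}$ are convex and strongly closed, hence weakly closed by Mazur's lemma, which is exactly weak lower semicontinuity of $\Reg{n}{p}$. The main technical point is the Fubini-based lift from a.e.\ convergence in $\Omega$ to a.e.\ convergence in $\Omega \times \Omega$; the rest is routine. Note that the argument does not require finiteness of $\Reg{n}{p}(g_k)$, since a subsequence with $\Reg{n}{p}(g_{k_j}) \to +\infty$ trivially satisfies the liminf inequality, so one may assume the liminf is finite from the outset.
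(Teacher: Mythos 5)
Your proof is correct, but it reaches the two key ingredients by a different route than the paper. Both arguments share the same skeleton: convexity plus strong lower semicontinuity on $L^q(\Omega)$, combined with the standard convex-analysis fact (Mazur / weakly closed sublevel sets) to upgrade to weak lower semicontinuity. The paper, however, establishes convexity and lower semicontinuity simultaneously by a duality representation: it truncates the domain away from the diagonal $\Delta$, writes the $L^p(\Pi_\ve)$-norm of the quotient function $\hat{g}$ as a supremum over test functions $\te$ with $L^{p^*}$-norm at most one, and after Fubini-type manipulations exhibits $\bigl(\Reg{n}{p}\bigr)^{1/p}$ as a pointwise supremum of functionals $g \mapsto \int_\Omega g\, h_\te$ that are continuous and linear on $L^q(\Omega)$; lower semicontinuity and convexity then follow at once. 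You instead prove convexity directly from the pointwise convexity of $t \mapsto |t|^p$ composed with the linear map $g \mapsto g(x)-g(y)$, and you obtain strong lower semicontinuity by passing to a subsequence realizing the liminf, extracting an a.e.\ convergent sub-subsequence, noting that a.e.\ convergence on $\Omega$ lifts to a.e.\ convergence of the integrand on $\Omega \times \Omega$, and applying Fatou's lemma to the non-negative integrands. Your route is more elementary: it avoids the $\ve$-truncation, the H\"older estimates needed to make the dual pairing well defined, and the Fubini bookkeeping, and it handles possibly infinite values of $\Reg{n}{p}$ with no extra effort. What the paper's heavier representation buys is an explicit description of $\bigl(\Reg{n}{p}\bigr)^{1/p}$ as a supremum of continuous linear functionals, which is structurally informative (e.g.\ towards dual formulations), but for the statement of the lemma itself your argument is complete and sound.
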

\begin{proof}
By a standard result of convex analysis (see e.g. \cite{Dac89}) it suffices to show that the functional $\Reg{n}{p}$ is convex and lower semicontinuous on $L^q(\Omega)$. Both properties of $\Reg{n}{p}$ are established below by representing the functional as the pointwise
supremum of convex and lower semicontinuous functionals on $L^q(\Omega)$.

Let $g \in L^q(\Omega)$.
Note that $\Reg{n}{p}(g)$ may be viewed as the $p$-th power of the $p$-norm on $\Omega \times \Omega$ of the following measurable function
\begin{equation}
\label{eq:hatf}
 \hat{g}(x,y) = \frac{g(y) - g(x)}{\abs{y-x}} \varphi_n^\frac{1}{p}(y-x)\;.
\end{equation}
Let $\Delta := \set{(x,x) \,:\, x \in \Omega}$ and
\begin{equation*}
\Pi_\ve = \set{(x,y) \in \Omega \times \Omega\,:\, \dist{(x,y)}{\Delta} > \ve}\;.
\end{equation*}

By monotone convergence
\begin{equation*}
 \norm{\hat{g}}_{L^p(\Omega \times \Omega)} = \sup_{\ve > 0} \set{\norm{\hat{g} \left. \right|_{\Pi_{\ve}} }_{L^p(\Pi_\ve)}}\;.
\end{equation*}

Let $\ve$ be sufficiently small such that $\Pi_\ve$ is not the empty set. We use the notation
\begin{equation*}
{\cal B}_{p^*}^\ve:=
\set{\te \in C_c(\Omega \times \Omega) \,:\, \,
     \left. \te \right|_{(\Omega \times \Omega) \setminus \Pi_\ve}  = 0 \text{ and }
     \norm{\te}_{L^{p^*}(\Pi_\ve)} \leq 1}
\end{equation*}
for the set of continuous functions on $\Omega \times \Omega$ with compact support and $L^{p^*}$-norm less than one that vanish in the complement of $\Pi_\ve$ in $\Omega \times \Omega$.

By duality, the $p$-norm of $\hat{g}$ on $\Pi_\ve$ can be represented as the following supremum:
\begin{equation*}
 \norm{\hat{g} \left. \right|_{\Pi_{\ve}} }_{L^p(\Pi_\ve)} =
 \sup \set{\int_{\Pi_\ve} \! \! \hat{g}(x,y) \eta(x,y) d(y,x) : \eta \! \in \! C_c(\Pi_\ve) \, \mathrm{and} \,
           \norm{\eta}_{L^{p_*}(\Pi_\ve)} \! \leq 1} \!.
\end{equation*}
Using that the functions from ${\cal B}_{p_*}^\ve$ vanish in $(\Omega \times \Omega) \setminus \Pi_\ve$ this supremum can be rewritten as
\[
 \sup_{\te \in {\cal B}_{p_*}^\ve} \int_\Omega \int_\Omega \hat{g}(x,y) \te(x,y)\,dy\,dx.
\]

 We need the following property of the functions $\te$ from the set ${\cal B}_{p_*}^\ve$: for all such $\te$ the function
\begin{equation}
\label{eqn101}
    x \mapsto \int_\Omega  \varphi_n^\frac{1}{p}(x-y) \frac{\te(x,y)}{\abs{x-y}}\,dy
\end{equation}
belongs to $L^{q}(\Omega)$. To see this note that for $p > 1$ we get by an application of H\"older's inequality and by using the vanishing property of $\te$ and  the hypothesis that  $\varphi_n$ has integral one
\begin{eqnarray*}
\begin{aligned}
\int_\Omega \left(   \int_\Omega \varphi_n^\frac{1}{p}(x-y) \frac{|\te(x,y)|}{\abs{y-x}}\,dy \right)^q \; dx
\leq &
\int_\Omega \left( \int_{\Omega \setminus B(x,\epsilon)} \varphi_n^\frac{1}{p}(x-y) \frac{\|\te\|_\infty}{\abs{y-x}}\,dy \right)^q dx \\
\leq &
\int_\Omega \left( \int_{\Omega \setminus B(x,\epsilon)} \varphi_n^\frac{1}{p}(x-y) \frac{\|\te\|_\infty}{\ve}\,dy \right)^q dx \\
\leq &
\left(\frac{\|\te\|_\infty}{\ve}\right)^q  {\mathcal{L}}(\Omega)^\frac{2q}{p^*} dx \\
< & \; \infty.
 \end{aligned}
\end{eqnarray*}
For $p = 1$ an analogous  computation can be carried out similarly.

In the same way it is proven that for all $\te \in {\cal B}_{p_*}^\ve$ the function
\begin{equation}
\label{eqn100}
  y \mapsto \int_\Omega  \varphi_n^\frac{1}{p}(x-y) \frac{\te(x,y)}{\abs{x-y}}\,dx
\end{equation}
 lies in $L^{q}(\Omega)$. By Fubini's theorem we can infer that
\begin{eqnarray*}
 \int_\Omega  \int_\Omega |g(y)| \varphi_n^\frac{1}{p}(x-y) \frac{|\te(x,y)|}{\abs{x-y}}\,dy\,dx\,
&  = &
 \int_\Omega   |g(y)|  \int_\Omega\varphi_n^\frac{1}{p}(x-y) \frac{|\te(x,y)|}{\abs{x-y}}\,dx\,dy\,\\
& < & \infty
\end{eqnarray*}
and thus the integral
\[
 \int_\Omega  \int_\Omega g(y) \varphi_n^\frac{1}{p}(x-y) \frac{\te(x,y)}{\abs{x-y}}\,dy\,dx
\]
converges.

Using the preceding results we can rewrite
\begin{equation*}
 \begin{aligned}
 \norm{\hat{g}}_{L^p(\Omega \times \Omega)}
= & \sup_{\ve > 0} \norm{\hat{g} \left. \right|_{\Pi_{\ve}} }_{L^p(\Pi_\ve)}
 \\
 = &
 \sup_{\ve > 0} \sup_{\te \in {\cal B}_{p_*}^\ve} \int_\Omega \int_\Omega \hat{g}(x,y) \te(x,y)\,dy\,dx\\
 = &
 \sup_{\ve > 0} \sup_{\te \in {\cal B}_{p_*}^\ve}
 \left( \int_\Omega  \int_\Omega g(y) \varphi_n^\frac{1}{p}(x-y) \frac{\te(x,y)}{\abs{x-y}}\,dy\,dx\,  - \right.\\
        & \qquad \qquad  \left. \int_\Omega g(x) \int_\Omega  \varphi_n^\frac{1}{p}(x-y) \frac{\te(x,y)}{\abs{x-y}}\,dy\,dx \right).
\end{aligned}
\end{equation*}
Exchanging the two variables in the first integral expression followed by an application of Fubini's theorem shows that the difference from above is equal to
\begin{equation*}
 \begin{aligned}
 &\sup_{\ve > 0} \sup_{\te \in {\cal B}_{p_*}^\ve}
 \left( \int_\Omega g(x) \int_\Omega  \varphi_n^\frac{1}{p}(x-y) \frac{\te(y,x)}{\abs{x-y}}\,dy\,dx\,  - \right.\\
        & \qquad \qquad  \left. \int_\Omega g(x) \int_\Omega  \varphi_n^\frac{1}{p}(x-y) \frac{\te(x,y)}{\abs{x-y}}\,dy\,dx \right)\\
 = &
 \sup_{\ve > 0} \sup_{\te \in {\cal B}_{p_*}^\ve}
 \int_\Omega g(x) \int_\Omega \varphi_n^\frac{1}{p}(x-y) \frac{\te(y,x) - \te(x,y)}{\abs{y-x}}\,dy\,dx\;.
 \end{aligned}
\end{equation*}
For every $\ve >0$ and any $\te \in {\cal B}_{p_*}^\ve$ the inner integral expression in the double integral expression above gives rise to the function
\begin{eqnarray*}
h_\te: \Omega & \to &   \R \\
      x  & \mapsto & \int_\Omega \varphi_n^\frac{1}{p}(x-y) \frac{\te(y,x) - \te(x,y)}{\abs{y-x}}\,dy.
 \end{eqnarray*}
Just as in (\ref{eqn101}) and (\ref{eqn100}) it is shown that all the functions $h_\te$ belong to $L^{q}(\Omega)$.

Thus, for all $\epsilon > 0$ the functional
\begin{equation*}
  g \mapsto \int_{\Omega} g(x) h_\te(x)\,dx
\end{equation*}
is continuous on $L^q(\Omega)$ for all  $\te  \in {\cal B}_{p_*}^\ve$, and since for all $g \in L^q(\Omega)$
\[
 \left({\Reg{n}{p}}\right)^\frac{1}{p}(g) = \norm{\hat{g}}_{L^p(\Omega \times \Omega)} = \sup_{\ve > 0} \sup_{\te \in {\cal B}_{p_*}^\ve} \int_{\Omega} g(x) h_\te(x)\,dx
\]
the  functional $\left({\Reg{n}{p}}\right)^\frac{1}{p}$ is the pointwise supremum of continuous functionals on $L^q(\Omega)$.
We conclude that the functional $\Reg{n}{p}$ is indeed the supremum of continuous functionals on $L^q(\Omega)$ and hence,
is lower semicontinuous on this space. The convexity of $\Reg{n}{p}$ can be proven in a likewise manner.
\end{proof}

Below we will use some compactness results of Bourgain, Br\'ezis, and Mironescu \cite{BouBreMir00} which are as follows.
\begin{theorem}
\label{th:brezis_lp}
Let $1 \leq p < \infty$. Assume that $(g_n)$ is a sequence of functions in $\Lp{p}$ such that $\Reg{n}{p}(g_n)$ is uniformly bounded.
Then the sequence $(g_n)$ is relatively compact in $\Lp{p}$ and has a subsequence $(g_{n_k})$ converging (in the $L^p$-norm)
to a limit function $g$ that lies in $W^{1,p}(\Omega)$ if $p >1$ and in $BV(\Omega)$ if $p=1$.
\end{theorem}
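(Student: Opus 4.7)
The statement is the $L^p$-compactness theorem of Bourgain--Br\'ezis--Mironescu, and the plan is to follow their strategy in three blocks: bound $(g_n)$ in $L^p$, apply a Fr\'echet--Kolmogorov criterion with a uniform translation modulus, then identify the limit in $W^{1,p}$ or $BV$.

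First I would establish a Poincar\'e-type inequality adapted to the functional $\Reg{n}{p}$: for $n$ sufficiently large there exists a constant $C$, independent of $n$ and of the function, such that $\| g - g_\Omega \|_p^p \leq C \, \Reg{n}{p}(g)$. Combined with the mean-zero hypothesis $(g_n) \subset \Lp{p}$ and with $\sup_n \Reg{n}{p}(g_n) < \infty$, this yields $\sup_n \| g_n \|_p < \infty$.

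Next I would prove a translation-continuity estimate uniform in $n$: there is a function $\omega$ with $\omega(|h|) \to 0$ as $h \to 0$ such that, for every $h$ small enough and every $n$, $\int \abs{g_n(x+h) - g_n(x)}^p \, dx \leq \omega(|h|) \, \bigl( 1 + \Reg{n}{p}(g_n) \bigr)$, the integral being taken over the set where both $x$ and $x+h$ lie in $\Omega$. The idea is to write the translation increment as an average along a chain of intermediate points inside a thin cone around the direction $h$, apply Fubini, and exploit the radial monotonicity of $\varphi_n$ to reshape the resulting double integral so that a factor of order $|h|^p$ is extracted against the $|x-y|^p$ in the denominator of $\Reg{n}{p}$. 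Together with the $L^p$-bound, Fr\'echet--Kolmogorov produces a subsequence $(g_{n_k})$ converging strongly in $L^p(\Omega)$ to some $g \in \Lp{p}$.

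Finally, to identify the limit, I would invoke the $\Gamma$-liminf half of the BBM/D\'avila characterization: if $g_{n_k} \to g$ in $L^p$ and $\sup_k \Reg{n_k}{p}(g_{n_k}) < \infty$, then $g \in W^{1,p}(\Omega)$ with $K_{p,N} |g|_{1,p}^p \leq \liminf_k \Reg{n_k}{p}(g_{n_k})$ when $p > 1$, and analogously $g \in BV(\Omega)$ with $K_{1,N} |Dg|$ on the left when $p = 1$; this follows by dualizing against smooth vector fields $\psi \in C_c^\infty(\Omega;\R^N)$ and using that $\varphi_n$ concentrates at the origin, so that in the limit the double integral reproduces the divergence pairing against $\psi$. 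The main technical obstacle is the uniform translation estimate above: because the supports of $\varphi_n$ shrink to the origin, one must extract a clean factor of order $|h|^p$ from the double integral without losing uniformity in $n$, and the radial monotonicity of $\varphi_n$ enters essentially, since it lets one dominate the weights along every intermediate point of the chain by the weight at the endpoints.
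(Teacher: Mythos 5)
The paper itself contains no proof of this statement: Theorem~\ref{th:brezis_lp} is quoted directly as a compactness result of Bourgain, Br\'ezis and Mironescu, so the only comparison available is with the argument of the cited source. Your plan is essentially a reconstruction of that argument -- a uniform $L^p$ bound, a translation/mollification estimate uniform in $n$, Fr\'echet--Kolmogorov, and identification of the limit through the liminf half of the BBM/D\'avila characterization -- and as a strategy it is the right one.

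Two ingredients deserve more care than your sketch gives them. First, the uniform Poincar\'e inequality $\norm{g-g_\Omega}_p^p \le C\,\Reg{n}{p}(g)$ (with $C$ independent of $n$ large) is itself a nontrivial theorem: it requires $\Omega$ to be connected, and the most commonly quoted proof runs by contradiction \emph{using} exactly the compactness you are trying to establish, so you must either invoke a direct proof (Ponce's estimate in the spirit of Poincar\'e, or a chaining argument over subcubes at the scale of the kernel) or restate the theorem, as BBM do, with a bound on $\norm{g_n}_p$ among the hypotheses; note that in the application made in this paper the $L^2$ fidelity term supplies such a bound anyway. Second, a translation estimate taken only over $\set{x : x,\,x+h \in \Omega}$ together with an $L^p$ bound does not by itself yield relative compactness in $L^p(\Omega)$: the Fr\'echet--Kolmogorov criterion on a bounded domain also needs uniform smallness of $\int_{\set{x \in \Omega\,:\, \dist{x}{\partial\Omega} < \delta}} \abs{g_n}^p\,dx$, or else an extension of the $g_n$ to a larger cube with control of the extended energies -- which is how the original proof proceeds for regular domains, mollifying the extended functions. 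With these repairs, both standard for a bounded connected $C^1$ domain, your outline matches the cited proof; the final identification of the limit in $W^{1,p}(\Omega)$ for $p>1$ and $BV(\Omega)$ for $p=1$ via the liminf inequality is exactly as in BBM and D\'avila.
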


In the following we apply the above results to prove existence and uniqueness of a minimizer of the functional $\Fun{}{p}$.

Here we make use of a scale space property of variational denoising algorithms, that they are grey level invariant (see \cite{SchWei00}).
We note that for all $1 \leq p < \infty$ a function $f_0$ is a minimum of $\Fun{}{p}$ if and only if $f_0 - \int_\Omega \fd$ minimizes the modification of $\Fun{}{p}$ where $\fd$ is replaced by the function $\fd - \int_\Omega \fd$.
Since the latter function has mean value zero we restrict our attention to the case that the mean of $\fd$ is zero, and consequently,
also the mean of the minimizer of $\Fun{}{p}$ is zero.

\begin{proposition}
\label{co:exist2}
Let $1 \leq p < \infty$ and $\alpha > 0$ and assume that $\fd \in \Lp{2}$.
\begin{enumerate}
 \item
Then the functional $\Fun{n}{p}$ attains a unique minimizer $f_n$ over $\Lp{2}$ that also belongs to
$\Lp{p}$.

\item The function $f_n$ is also a minimizer of $\Fun{n}{p}$ over $L^1(\Omega)$.

\item The sequence of numbers $(\Reg{n}{p} (f_n))$ is uniformly bounded over $n \in \N$, and the sequence $f_n$ has a convergent subsequence whose limit $f$ is an element of $W^{1,p}(\Omega)$ if $p > 1$ and of $BV(\Omega)$ if $p = 1$.
\end{enumerate}
\end{proposition}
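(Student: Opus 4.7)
The plan is to address the three assertions in order by the direct method of the calculus of variations, using Lemmas \ref{le:regu} and \ref{le:lsc} together with Theorem \ref{th:brezis_lp}. For (1), I would apply the direct method on the space $\Lp{2}$. Starting from a minimizing sequence $(f_k) \subset \Lp{2}$, the boundedness of $\So(f_k)$ bounds $(f_k)$ in $L^2$, so after extraction a subsequence converges weakly in $L^2(\Omega)$ to some $f_n$; the mean-value functional being continuous (hence weakly continuous) on $L^2$, $f_n$ inherits the mean-zero condition. Weak lower semicontinuity applies separately to $\So$ (strictly convex and norm-continuous on $L^2$) and to $\Reg{n}{p}$ via Lemma \ref{le:lsc} with $q = 2$, giving $\Fun{n}{p}(f_n) \leq \liminf_k \Fun{n}{p}(f_k)$ and hence minimality. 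Uniqueness follows from strict convexity of $\So$ combined with convexity of $\Reg{n}{p}$. Finally, $f_n \in \Lp{p}$ is forced by Lemma \ref{le:regu}: otherwise $\Reg{n}{p}(f_n) = +\infty$, contradicting the finite upper bound $\Fun{n}{p}(f_n) \leq \Fun{n}{p}(0) = \frac{1}{2}\|\fd\|_2^2$.

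For (2), I would reduce to (1) via translation invariance of $\Reg{n}{p}$ in its argument (the integrand depends only on the difference $f(y) - f(x)$) combined with the standing assumption $\fd \in \Lp{2}$. Given $g \in L^1(\Omega)$: if $g \notin L^2(\Omega)$ then $\So(g) = +\infty$ and the desired inequality $\Fun{n}{p}(g) \geq \Fun{n}{p}(f_n)$ is trivial. If $g \in L^2(\Omega)$, set $c := g_\Omega$ so that $g - c \in \Lp{2}$; then $\Reg{n}{p}(g - c) = \Reg{n}{p}(g)$, and expanding the fidelity term yields
\[
\|g - \fd\|_2^2 = \|(g-c) - \fd\|_2^2 + c^2\,\mathcal{L}(\Omega) \geq \|(g-c) - \fd\|_2^2,
\]
where the cross term vanishes because both $g - c$ and $\fd$ have mean zero. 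Applying (1) to the admissible competitor $g - c$ yields $\Fun{n}{p}(g) \geq \Fun{n}{p}(g - c) \geq \Fun{n}{p}(f_n)$.

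For (3), testing $\Fun{n}{p}$ against the competitor $f \equiv 0$ gives
\[
\Fun{n}{p}(f_n) \leq \Fun{n}{p}(0) = \frac{1}{2}\|\fd\|_2^2,
\]
which simultaneously bounds $\|f_n - \fd\|_2$ (hence $\|f_n\|_2$) and yields $\Reg{n}{p}(f_n) \leq \frac{K_{p,N}}{2\alpha}\|\fd\|_2^2$ uniformly in $n$. Since each $f_n$ lies in $\Lp{p}$ by (1), this is exactly the hypothesis of Theorem \ref{th:brezis_lp}, which produces a subsequence converging in $L^p(\Omega)$ to a limit $f$ lying in $W^{1,p}(\Omega)$ if $p > 1$ and in $BV(\Omega)$ if $p = 1$. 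The main delicacy is in (1) when $p > 2$: weak $L^2$-compactness alone does not guarantee the limit lies in $L^p$, so Lemma \ref{le:regu} is essential to exclude minimizers in $L^2 \setminus L^p$; for $p \in [1,2]$ the inclusion $L^2(\Omega) \subset L^p(\Omega)$ on the bounded domain makes this automatic, and the remaining steps are routine applications of convex analysis on the reflexive space $L^2(\Omega)$.
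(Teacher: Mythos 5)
Your proposal is correct and follows essentially the same route as the paper: the direct method with weak $L^2$-compactness, weak lower semicontinuity of $\So$ and of $\Reg{n}{p}$ via Lemma \ref{le:lsc}, strict convexity for uniqueness, Lemma \ref{le:regu} for membership in $\Lp{p}$, the mean-subtraction comparison for minimality over $L^1(\Omega)$, and the test against $0$ plus Theorem \ref{th:brezis_lp} for the third assertion. The only differences are cosmetic (your explicit cross-term expansion with $c = g_\Omega$ and the remark on $p>2$ versus $p\le 2$ slightly sharpen what the paper leaves implicit).
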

\begin{proof}
We begin with the proof of the first assertion and show first that the functional $\Fun{n}{p}$ attains a unique minimizer $f_n$ over $\Lp{2}$.
Let $(g_k)$ be a minimizing sequence of functions in $\Lp{2}$ for the functional $\Fun{n}{p}$ such that
\begin{equation*}
 \lim_{k \to \infty} \Fun{n}{p} (g_k) = \inf_{g \in \Lp{2}} \Fun{n}{p}(g) \leq \Fun{n}{p}(0) = \frac{1}{2} \norm{\fd}_2^2 < \infty\;.
\end{equation*}
Since
\[
 \|g_k - \fd\|_2^2 \leq 2\Fun{n}{p} (g_k)
\]
for all $k \in \N$
this implies that $(g_k)$ is uniformly bounded in $\Lp{2}$ and thus has a weakly convergent subsequence in $\Lp{2}$. Let us denote
this subsequence again by $(g_k)$ and its weak limit by $f_n \in \Lp{2}$.

We notice that the first summand ${\cal{S}}$ of $\Fun{n}{p}$ is convex and continuous on $\Lp{2}$ and thus, weakly lower semicontinuous on this space \cite{Dac89}.
From Lemma \ref{le:lsc} it follows that $\Reg{n}{p}$ and thus, also $\Fun{n}{p}$ are weakly lower semicontinuous on $\Lp{2}$, too.

It follows that
\[
    \Fun{n}{p}(f_n) \leq  \liminf_{k \to  \infty} \Fun{n}{p}(g_k) =  \inf_{g \in \Lp{2}} \Fun{n}{p}(g).
\]
Thus, $f_n$ is a minimizer for $\Fun{n}{p}$.

As the first summand ${\cal{S}}$ of $\Fun{n}{p}$ is the composition of a convex and a strictly increasing function it is strictly convex. By (the proof of) Lemma \ref{le:lsc} the second summand  $\Reg{n}{p}$ of $\Fun{n}{p}$ is convex and thus, we can infer strict convexity for the whole functional $\Fun{n}{p}$.
The latter implies the uniqueness of the minimizer.

By Lemma \ref{le:regu}  the minimizer $f_n$ also belongs to $\Lp{p}$.

 We now show the second assertion, namely, that $f_n$ minimizes $\Fun{n}{p}$ over $L^1(\Omega)$.

Since the mean value of $\fd$ is zero, for each $g  \in L^2(\Omega)$ we have
\begin{eqnarray*}
  \mathcal{S}\left(g - \int_\Omega g(y) \; dy \right)
&  = &  \int_\Omega \left( \left(g(x) - \int_\Omega g(y) \; dy \right) - \fd(x) \right)^2 \; dx \\
& = & \int_\Omega \left(g(x) - \fd(x)\right)^2  \; dx - \left( \int_\Omega g(x)  \; dx \right)^2 \\
& \leq &
\mathcal{S}(g)
\end{eqnarray*}
and
\[
\Reg{n}{p}\left(g - \int_\Omega g(y) \; dy \right)  = \Reg{n}{p}(g).
\]
Thus, for every $g \in L^2(\Omega)$ there exists a function $\tilde{g} \in \Lp{2}$ with
\[
\Fun{n}{p}(\tilde{g}) \leq \Fun{n}{p}(g),
\]
and it follows that the function $f_n$ is a minimizer of $\Fun{n}{p}$ over $L^2(\Omega)$. It is also a minimizer of $\Fun{n}{p}$ over $L^1(\Omega)$ since the first summand of $\Fun{n}{p}$ equals infinity for $g \in L^1(\Omega) \setminus L^2(\Omega)$.

For the proof of the third assertion note that
for all $n \in \N$
\[
 \Reg{n}{p}(f_n) \leq \frac{1}{\alpha}\Fun{n}{p}(f_n) \leq \frac{1}{\alpha}\Fun{n}{p}(0) = \frac{1}{2\alpha} \|\fd\|_2^2.
\]
Therefore, the sequence $(\Reg{n}{p}(f_n))$ is uniformly bounded. By theorem \ref{th:brezis_lp} the sequence $f_n$ has a convergent subsequence whose limit $f$ lies in $W^{1,p}(\Omega)$ if $p > 1$ and in $BV(\Omega)$ if $p = 1$.
\end{proof}

\textbf{Notation}: We denote the subsequence occurring in the third assertion of the previous proposition again by $(f_n)$ and use that notation for the rest of our paper. We denote by $f$ the limit of $(f_n)$ for the rest of the paper as well.

The following remark provides another justification that the mean of the minimizers $f_n$ is zero.
\begin{remark}
\label{remark}
 Let $h$ be the function
\begin{eqnarray*}
 h: \Omega & \to &\R \\
           x &\mapsto & 1.
\end{eqnarray*}
Note that for all $g \in L^2(\Omega)$ the functional $\Fun{n}{p}$ is Gateaux-differentiable in direction $h$ and that the Gateaux-derivative $({\Fun{n}{p}})'(g;h)$ satisfies
\[
 ({\Fun{n}{p}})'(g;h) = \int_\Omega (g - \fd)(x) \; dx
\]
since $\Reg{n}{p}(g+\delta h) = \Reg{n}{p}(g)$ for all $g \in L^2(\Omega)$ and all real $\delta \ne 0$.
Thus,  we can infer
\[
\int_\Omega (f_n - \fd)(x) \; dx =  ({\Fun{n}{p}})'(f_n;h) = 0
\]
which means that the mean value of the minimizer $f_n$  is equal to the mean value of $\fd$ that was assumed to be zero above.
\end{remark}

It remains to clarify whether the limit function $f$ is a minimum of the respective limit functional $\Fun{}{p}$. The concerning questions are answered to a large extent by a result of A. Ponce \cite{Pon04b} in terms of $\Gamma$-convergence.

We recall the definition of $\Gamma$-convergence in $L^1(\Omega)$ \cite{dal93}. Let $(F_n)$ denote a sequence of functionals mapping functions from $L^1(\Omega)$ to the set of extended real numbers $\bar{\mathbb{R}}$, and let $F$ be a functional of this kind, too. Then the sequence $(F_n)$ $\Gamma$-converges to $F$ with respect to the $L^1(\Omega)$-topology if and only if the following two conditions are satisfied
\begin{itemize}
 \item for every $g \in  L^1(\Omega)$ and for every sequence $(g_n)$ in $L^1(\Omega)$  converging to $g$ in the $L_1$-norm we have
\[
  F(g) \leq \liminf_{n \to \infty} F_n(g_n);
\]

\item for every $g \in  L^1(\Omega)$ there exists a sequence $(g_n)$ in $L^1(\Omega)$  converging to $g$ in the $L_1$-norm with
\[
  F(g) = \lim_{n \to \infty} F_n(g_n).
\]
\end{itemize}
In this case we write
\[
   \Gamma^-_{L^1(\Omega)}\textrm{-}\lim_{n \to \infty} F_n = F.
\]

We denote by sc$^-_{L^1(\Omega)} F$ the lower semicontinuous envelope of the functional $F$ with respect to the strong $L^1$-topology, that is, sc$^-_{L^1(\Omega)} F$ is the greatest lower semicontinuous functional less than or equal to $F$.

Ponce's result is established in a far more general setting than the one we are treating here. He investigates double integrals of the kind
\[
 \int_{\Omega} \int_{\Omega} \omega \left(\frac{|g(y) - g(x)|}{\abs{y-x}} \right) \rho_\epsilon (y-x) \; dy dx
\]
where $\omega$ is a continous function from $[0, \infty)$ to $[0, \infty)$ and $(\rho_\epsilon)_{\epsilon >0}$ is a family of nonnegative functions in $L^1(\mathbb{R}^N)$ satisfying the conditions (\ref{eq:varphi1}) and (\ref{eq:varphi2}). The functions $\rho_\epsilon$ are not assumed to be radial or radially decreasing.

The functions $\rho_\epsilon$ induce positive Radon measures $\mu_\epsilon$ on the sphere $S^{N-1}$: Let $B$ be a Borel subset of $S^{N-1}$ and let
\[
 \mathbb{R}_+ B := \{ rx \; : \; r \geq 0 \; \mathrm{and} \; x \in B\}.
\]
be the cone with its apex in the origin that is generated by $B$. Let
\[
  \mu_\epsilon(B) := \int_{\mathbb{R}_+ B} \rho_\epsilon(x) \; dx.
\]

The family of measures $(\mu_\epsilon)$ is bounded by $1$ and thus, has a subsequence $\mu_{\epsilon_j}$ that converges weakly to a Radon measure $\mu$ on $S^{N-1}$. Moreover,  let  $\omega_\mu$ be the real function on $\mathbb{R}^N$ defined by
\[
 \omega_\mu(\vec{v}) := \int_{S^{N-1}} \omega\left( \left| \left<\vec{v}, \sigma\right> \right| \right) \; d\mu(\sigma)
\]
for all $\vec{v} \in \mathbb{\R}^N$.

Further, let $\omega^{**}$ denote the convex lower semicontinuous envelope of the function $\omega$, that is, by our assumption, the greatest
convex function less than or equal to $\omega$.

Finally, Ponce defines the functional
\begin{eqnarray*}
 F: L^1(\Omega) & \to & [0, +\infty] \\
g & \mapsto & \left\{  \begin{array}{cc}
              \int_\Omega \omega_\mu(\nabla g(x)) \; dx & \mathrm{if} \; g \in C^1(\bar{\Omega}), \\
                +\infty      &  \mathrm{otherwise}.
             \end{array} \right.
\end{eqnarray*}

Ponce's result, which he proves for bounded open sets with Lipschitz boundary, is as follows:

\begin{theorem}
 If for all $x \in \mathbb{R}^N$
\[
 (\omega_\mu)^{**}(x)  = (\omega^{**})_\mu(x)
\]
then
\[
  \Gamma^-_{L^1(\Omega)}\textrm{-}\lim_{j \to \infty} \int_\Omega \int_\Omega \omega \left(\frac{|g(y) - g(x)|}{\abs{y-x}} \right) \rho_{\epsilon_j} (y-x) \; dy dx  = \mathrm{sc}^-_{L^1(\Omega)} F(g)
\]
for every $g \in L^1(\Omega)$.
\end{theorem}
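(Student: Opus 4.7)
The plan is to verify the two defining conditions of $\Gamma$-convergence, namely the $\liminf$ inequality and the existence of a recovery sequence, separately. Because the $\Gamma$-limit of any sequence of functionals is automatically $L^1$-lower semicontinuous, to identify it with $\mathrm{sc}^-_{L^1(\Omega)} F$ it suffices to (i) construct an attaining sequence for $F(g)$ on the dense class $C^1(\bar\Omega)$, and (ii) establish a matching $\liminf$ bound. The hypothesis $(\omega_\mu)^{**} = (\omega^{**})_\mu$ is precisely what will close the non-convexity gap between the two bounds.

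For the recovery sequence I would first take $g \in C^1(\bar\Omega)$ and set $g_j := g$. Substituting $y = x + h$ and using polar coordinates $h = r\sigma$, together with the uniform estimate
\[
 \frac{|g(x+r\sigma) - g(x)|}{r} = |\nabla g(x)\cdot \sigma| + o(1) \quad \text{as } r \to 0,
\]
coming from continuity of $\nabla g$ on $\bar\Omega$ and the shrinking support condition \req{varphi1}, reduces the double integral to
\[
 \int_\Omega \int_{S^{N-1}} \omega(|\nabla g(x) \cdot \sigma|) \, d\mu_{\epsilon_j}(\sigma)\, dx + o(1).
\]
Weak convergence $\mu_{\epsilon_j} \rightharpoonup \mu$ combined with continuity of $\omega$ yields the limit $\int_\Omega \omega_\mu(\nabla g(x))\, dx = F(g)$. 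A diagonal argument based on density of $C^1(\bar\Omega)$ in $L^1(\Omega)$ and on the $L^1$-lower semicontinuity of the $\Gamma$-limit then promotes this to a recovery sequence for $\mathrm{sc}^-_{L^1(\Omega)} F$ at an arbitrary $g \in L^1(\Omega)$.

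The main obstacle is the matching $\liminf$ inequality for an arbitrary sequence $g_j \to g$ in $L^1(\Omega)$. The strategy is to pass to the convex envelope pointwise: since $\omega \geq \omega^{**}$, the double integral with $\omega$ dominates the one with $\omega^{**}$, and for the convex integrand $\omega^{**}$ a blow-up / localisation argument combined with Jensen's inequality applied to the probability density $\rho_{\epsilon_j}(h)\,dh$ should yield
\[
 \liminf_{j\to\infty} \int_\Omega\!\!\int_\Omega \omega^{**}\!\left(\frac{|g_j(y)-g_j(x)|}{|y-x|}\right) \rho_{\epsilon_j}(y-x)\, dy\, dx \geq \int_\Omega (\omega^{**})_\mu(\nabla g(x))\, dx
\]
for $g \in W^{1,1}(\Omega)$, with the left-hand side forcing $+\infty$ otherwise (this part requires a compactness result in the spirit of Theorem~\ref{th:brezis_lp} to exclude non-Sobolev limits when the bound is finite). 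Invoking the hypothesis $(\omega^{**})_\mu = (\omega_\mu)^{**}$ identifies the right-hand side with $\int_\Omega (\omega_\mu)^{**}(\nabla g(x))\, dx$, which by the classical relaxation theorem for integral functionals with continuous integrand is exactly $\mathrm{sc}^-_{L^1(\Omega)} F(g)$. The delicate point, and the reason the hypothesis is indispensable, is that without the commutation $(\omega^{**})_\mu = (\omega_\mu)^{**}$ the natural lower bound delivered by convexification lives strictly below the relaxation of $F$, so the two halves of the sandwich would fail to meet.
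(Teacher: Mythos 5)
First, a point of orientation: the paper does not prove this statement at all --- it is quoted verbatim as Ponce's theorem and attributed to \cite{Pon04b}, so there is no internal proof to compare against; your proposal has to be judged as a sketch of Ponce's own (rather long) argument. Your $\Gamma$-limsup half is essentially right and matches the standard route: constant recovery sequences for $g \in C^1(\bar\Omega)$ via the pointwise limit of the nonlocal functional, then lower semicontinuity of the $\Gamma$-upper limit to pass from $F$ to $\mathrm{sc}^-_{L^1(\Omega)}F$.

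The $\liminf$ half, however, has two genuine gaps. (a) Your claim that a finite $\liminf$ forces $g \in W^{1,1}(\Omega)$, together with the identification $\mathrm{sc}^-_{L^1(\Omega)}F(g)=\int_\Omega(\omega_\mu)^{**}(\nabla g)\,dx$, is false exactly in the linear-growth case, which is the case the paper needs ($\omega(t)=t$, $p=1$): there $\mathrm{sc}^-_{L^1(\Omega)}F$ is finite on all of $BV(\Omega)$ and contains the recession term $\int_\Omega \omega_\mu^\infty\bigl(\tfrac{dD^sg}{d|D^sg|}\bigr)\,d|D^sg|$ (this is how the paper later uses \cite{CorDeA93}), and your own cited compactness tool, Theorem~\ref{th:brezis_lp}, only places the limit in $BV(\Omega)$, not $W^{1,1}(\Omega)$. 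With your stated bounds the two halves of the sandwich do not meet on $BV(\Omega)\setminus W^{1,1}(\Omega)$; indeed the claimed lower bound of $+\infty$ there would contradict your upper bound, so the $\liminf$ statement as written cannot be correct. (b) The proposed mechanism --- Jensen's inequality with respect to the probability density $\rho_{\epsilon_j}(h)\,dh$ --- loses the directional structure: it yields a lower bound of the form $\int_\Omega \omega^{**}\bigl(\int_{S^{N-1}}|\nabla g\cdot\sigma|\,d\mu\bigr)dx$, i.e.\ $\omega^{**}$ of the $\mu$-average of the directional derivative, which by convexity is in general strictly \emph{smaller} than the needed $\int_\Omega(\omega^{**})_\mu(\nabla g)\,dx=\int_\Omega\int_{S^{N-1}}\omega^{**}(|\nabla g\cdot\sigma|)\,d\mu\,dx$. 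The actual proof must disintegrate $\rho_{\epsilon_j}$ in polar form, work on one-dimensional sections of $\Omega$ along each fixed direction $\sigma$ (where convexity and lower semicontinuity of 1D difference-quotient functionals can be exploited, and where the BV/recession behaviour is captured), and only then integrate in $\sigma$ against $\mu_{\epsilon_j}$ and pass to the weak limit $\mu$; this sectioning step is the technical core of Ponce's theorem and is absent from your sketch.
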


Let us now apply Ponce's result to our problem.
Let $\mu_n$ denote the Radon measure induced by $\varphi_n$. Since $\varphi_n$ is radial with integral one we get
\[
   \mu_n(B) = \int_{\mathbb{R}_+ B} \varphi_n(x) \; dx =  \frac{{\cal{H}}^{N-1}(B)}{{\cal{H}}^{N-1}(S^{N-1})}
\]
for all Borel subsets $B$ of $S^{N-1}$.

Therefore, all the measures $\mu_n$ are equal and thus, in contrast to the more general situation treated by Ponce, the whole sequence $(\mu_n)$ is converging weakly to a limit measure $\mu$ that itself equals all of the measures $\mu_n$.

We note that in our case $\omega$ is the function defined by
\[
 \omega(x) := |x|^p
\]
for all $x \in \mathbb{R}^N$.
Thus, for all $\vec{v} \in \mathbb{R}^N$
\[
\omega_\mu(\vec{v}) = \int_{S^{N-1}}  \left| \left<\vec{v}, \sigma\right> \right|^p \; d\mu(\sigma) = \frac{1}{{\cal{H}}^{N-1}(S^{N-1})} \int_{S^{N-1}}  \left| \left<\vec{v}, \sigma\right> \right|^p \; d\mathcal{H}^{N-1}(\sigma) ,
\]
and since the integrand only depends on the length of $\vec{v}$ we have
\[
 \omega_\mu(\vec{v}) = \frac{|\vec{v}|^p}{{\cal{H}}^{N-1}(S^{N-1})} \int_{S^{N-1}}  \left| \left<e, \sigma\right> \right|^p \; d\mathcal{H}^{N-1}(\sigma) = K_{p,N} |\vec{v}|^p.
\]
where $e$ is an arbitrary unit vector in $\mathbb{R}^N$. Since both functions $\omega$ and $\omega_v$ are convex it follows that
\[
 (\omega_\mu)^{**}  = (\omega^{**})_\mu
\]
and thus, Ponce's theorem is applicable to our problem. We get
\[
  \Gamma^-_{L^1(\Omega)}\textrm{-}\lim_{n \to \infty} \int_\Omega \int_\Omega \frac{|g(y) - g(x)|^p}{\abs{y-x}^p}  \varphi_n (y-x) \; dy dx  = \mathrm{sc}^-_{L^1(\Omega)} F(g).
\]
for all $g \in L^1(\Omega)$.

We have to determine the functional  $\mathrm{sc}^-_{L^1(\Omega)} F$. From theorem 1.2. in \cite{CorDeA93} we get that for all $g \in L^1(\Omega)$
\[
  \mathrm{sc}^-_{L^1(\Omega)}F(g) = \left\{ \! \! \!  \begin{array}{cl}
                K_{p,N} \! \int_\Omega \left|D^a g(x)\right|^p \! \; dx + \int_\Omega  \omega_\mu^\infty \left(\frac{dD^sg}{d|D^sg|}  \right) \; d|D^sg|   & \mathrm{if} \; g \in BV(\Omega), \\
                +\infty      &  \mathrm{otherwise} \;
             \end{array} \right.
\]
where in the case $g \in BV(\Omega)$ the symbol $D^ag$ denotes the Radon-Nikodym derivative of the absolute continuous part of the vector-valued Radon measure $Dg$ with respect to Lebesgue measure and $\frac{dD^sg}{d|D^sg|}$ is the Radon-Nikodym derivative of the singular part $D^sg$ of $Dg$ with respect to its total variation $|D^sg|$. The function $\omega_\mu^\infty$ is the recession function: it is defined on $\Omega$, takes its values in the extended real numbers, and is  given by
\[
 \omega_\mu^ \infty(x) = \lim_{t \to \infty}  \frac{\omega_\mu(tx)}{t} =
 \left\{  \begin{array}{cc}
                K_{1,N} |x|  & \mathrm{if} \;  p = 1, \\
                +\infty      &  \mathrm{if} \;  p > 1.   \end{array} \right.
\]
for all $x \in \Omega$.

It follows that for $p = 1$
\[
  \mathrm{sc}^-_{L^1(\Omega)}F(f) = \left\{  \begin{array}{cc}
                K_{1,N}  |Df| & \mathrm{if} \; f \in BV(\Omega), \\
                +\infty      &  \mathrm{if} f \in L^1(\Omega) \setminus BV(\Omega).
             \end{array} \right.
\]
and for $p > 1$
\[
  \mathrm{sc}^-_{L^1(\Omega)}F(f) = \left\{  \begin{array}{cc}
                K_{p,N} |f|^p_{1,p} & \mathrm{if} \; f \in W^{1,p}(\Omega), \\
                +\infty      &  \mathrm{if} f \in L^1(\Omega) \setminus W^{1,p}(\Omega).
             \end{array} \right.
\]

Thus, the $\Gamma^-_{L^1(\Omega)}$-limit of the sequence of functionals $\Reg{n}{p}$ has been established for all $p \geq 1$. By \cite{dal93}, example 1.21., it is clear that the first summand of the functionals $\Fun{n}{p}$ is lower semicontinuous on $L^1(\Omega)$, and is thus $\Gamma^-_{L^1(\Omega)}$-converging to the first summand of the functionals $\Fun{}{p}$.  Altogether,
 the following theorem has been proved.
\begin{theorem}
 For all $1 \leq p < \infty$ the sequence of functionals $(\Fun{n}{p})$ converges in the $\Gamma_{L^1(\Omega)}^-$-sense to the limit functional $\Fun{}{p}$.
\end{theorem}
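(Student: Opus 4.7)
The plan is to combine Ponce's $\Gamma$-convergence result for the regularizers $\Reg{n}{p}$ (derived above) with a standard sum rule that adds in the fidelity term ${\cal S}$. The argument splits into three steps.

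First, I would check that the hypothesis $(\omega_\mu)^{**}=(\omega^{**})_\mu$ of Ponce's theorem is satisfied in our setting. Here $\omega(x)=|x|^p$ is convex, so $\omega^{**}=\omega$, and each $\varphi_n$ is radial with unit mass, which forces every $\mu_n$ to coincide with the normalized surface measure on $S^{N-1}$; consequently the limiting $\mu$ equals this common value. The computation carried out above gives $\omega_\mu(\vec v)=K_{p,N}|\vec v|^p$, which is again convex. Both $(\omega_\mu)^{**}$ and $(\omega^{**})_\mu$ therefore reduce to $\omega_\mu$ itself, so Ponce's theorem applies and yields $\Gamma^-_{L^1(\Omega)}\text{-}\lim_n \Reg{n}{p} = \mathrm{sc}^-_{L^1(\Omega)}F$.

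Second, I would identify the envelope $\mathrm{sc}^-_{L^1(\Omega)}F$ using the integral representation from \cite{CorDeA93}: it decomposes into an absolutely continuous part with density $\omega_\mu(\nabla g)=K_{p,N}|\nabla g|^p$ and a singular part governed by the recession $\omega_\mu^\infty$. For $p=1$, $\omega_\mu^\infty(x)=K_{1,N}|x|$ and the envelope is $K_{1,N}|Dg|$ on $BV(\Omega)$ and $+\infty$ off it; for $p>1$, $\omega_\mu^\infty$ is $+\infty$ away from the origin, forcing $D^sg=0$ whenever the envelope is finite, leaving $K_{p,N}|g|_{1,p}^p$ on $W^{1,p}(\Omega)$ and $+\infty$ on $L^1(\Omega)\setminus W^{1,p}(\Omega)$. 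This matches the regularization term of $\Fun{}{p}$ up to the prefactor $\alpha/K_{p,N}$.

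Third, I would add ${\cal S}$ to obtain $\Gamma$-convergence of the full functional. The $\Gamma$-liminf inequality follows from adding Ponce's liminf for $\Reg{n}{p}$ to the $L^1$-lower semicontinuity of ${\cal S}$: if $g_k\to g$ in $L^1(\Omega)$, passing to an a.e.\ convergent subsequence and applying Fatou's lemma to $(g_k-\fd)^2$ yields ${\cal S}(g)\leq \liminf {\cal S}(g_k)$ (with value $+\infty$ outside $L^2$). For the $\Gamma$-limsup inequality one needs a single recovery sequence that works for both summands. If $g\notin L^2(\Omega)$, then $\Fun{}{p}(g)=+\infty$ and any sequence suffices; otherwise $g\in L^2(\Omega)\cap W^{1,p}(\Omega)$ (or $g\in L^2(\Omega)\cap BV(\Omega)$), and one verifies that Ponce's recovery sequence built from mollifications of $g$ converges to $g$ in $L^2(\Omega)$ as well as in $L^1(\Omega)$, so that ${\cal S}(g_n)\to{\cal S}(g)$ and the two limsup bounds add cleanly. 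I expect this final matching of Ponce's recovery sequence to the $L^2$ topology of the fidelity to be the only step requiring care beyond direct invocation of the two previous theorems.
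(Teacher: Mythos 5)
Your first two steps coincide with the paper's own argument: you verify the hypothesis $(\omega_\mu)^{**}=(\omega^{**})_\mu$ via convexity of $|x|^p$ and the fact that every $\mu_n$ is the normalized surface measure, you apply Ponce's theorem to get $\Gamma$-convergence of $\Reg{n}{p}$, and you identify $\mathrm{sc}^-_{L^1(\Omega)}F$ through the relaxation formula of \cite{CorDeA93}, with the recession function separating $p=1$ from $p>1$ exactly as in the text. The difference is in the treatment of the fidelity term: the paper merely observes that ${\cal S}$ is lower semicontinuous on $L^1(\Omega)$, hence as a constant sequence $\Gamma$-converges to itself by \cite{dal93}, Example 1.21, and then asserts the theorem, whereas you spell out the sum step, proving the liminf inequality by Fatou and insisting on a joint recovery sequence for the limsup inequality. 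Your explicitness is a genuine improvement, since a sum of $\Gamma$-limits is not in general the $\Gamma$-limit of the sum, so the paper's one-line conclusion implicitly relies on precisely the joint-recovery argument you describe. On the other hand, your proposed construction (Ponce's mollification-based recovery sequence, checked to converge in $L^2(\Omega)$) is heavier than necessary and is the one point you leave unverified: it would require a diagonal argument over the two indices and some care near $\partial\Omega$. This can be bypassed entirely: the pointwise convergence results of Bourgain--Br\'ezis--Mironescu \cite{BouBreMir00} and D\'avila \cite{Dav02} quoted in the introduction already give $\lim_{n\to\infty}\Reg{n}{p}(g)=K_{p,N}\,|g|_{1,p}^p$ for $g\in W^{1,p}(\Omega)$, respectively $K_{1,N}\,|Dg|$ for $g\in BV(\Omega)$, so the constant sequence $g_n\equiv g$ is a recovery sequence along which ${\cal S}(g_n)={\cal S}(g)$ trivially; this settles the limsup inequality for every $g$ with $\Fun{}{p}(g)<\infty$ (the case $\Fun{}{p}(g)=+\infty$ being vacuous) and removes the only delicate point in your plan.
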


\begin{corollary}
 For all $1 \leq p < \infty$ the limit function $f$ of the (sub)sequence of minimizers $(f_n)$ of $\Fun{n}{p}$ is the unique minimum of the limit functional $\mathcal{F}_p$ over $L^1(\Omega)$. The minimum $f$ also belongs to the space $\L_\diamond^2(\Omega) \cap  W^{1,p}$ if $p > 1$ and $\Lp{2} \cap BV(\Omega)$ if $p = 1$.
\end{corollary}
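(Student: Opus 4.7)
The plan is to invoke the standard principle that $\Gamma$-convergence, combined with equi-coercivity of the approximating functionals, forces (cluster points of) minimizers of $\Fun{n}{p}$ to be minimizers of the $\Gamma$-limit $\Fun{}{p}$. The preceding theorem already supplies the $\Gamma^-_{L^1(\Omega)}$-convergence, and Proposition \ref{co:exist2} already supplies the compactness: the subsequence $(f_n)$ converges in $L^p(\Omega)$, hence (since $\Omega$ is bounded) in $L^1(\Omega)$, to a limit $f$ lying in $W^{1,p}(\Omega)$ (if $p>1$) or $BV(\Omega)$ (if $p=1$). So only the minimality, uniqueness, and mean-zero assertions need to be extracted.

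To show that $f$ minimizes $\Fun{}{p}$ over $L^1(\Omega)$, I would pick an arbitrary competitor $g\in L^1(\Omega)$; if $\Fun{}{p}(g)=+\infty$ there is nothing to prove, so I may assume $g\in L^2_\diamond(\Omega)$ together with the appropriate smoothness. The $\limsup$ part of the $\Gamma$-convergence theorem yields a recovery sequence $(g_n)\subset L^1(\Omega)$ with $g_n\to g$ in $L^1$ and $\Fun{n}{p}(g_n)\to \Fun{}{p}(g)$. Since $f_n$ minimizes $\Fun{n}{p}$ over $L^1(\Omega)$ (the second assertion of Proposition \ref{co:exist2}), one has $\Fun{n}{p}(f_n)\leq \Fun{n}{p}(g_n)$ for every $n$. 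Combining this with the $\liminf$ inequality applied to the converging sequence $f_n\to f$ in $L^1(\Omega)$ gives
\[
\Fun{}{p}(f)\;\leq\;\liminf_{n\to\infty}\Fun{n}{p}(f_n)\;\leq\;\liminf_{n\to\infty}\Fun{n}{p}(g_n)\;=\;\Fun{}{p}(g),
\]
so $f$ is a minimizer.

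Uniqueness is immediate from strict convexity: the fidelity $\mathcal{S}$ is strictly convex on $L^2(\Omega)$ by the same argument used in the proof of Proposition \ref{co:exist2}, and the regularization term of $\Fun{}{p}$ (either $\alpha|Df|$ or $\alpha|f|_{1,p}^p/K_{p,N}$) is convex; the sum is therefore strictly convex wherever finite, ruling out a second minimizer in $L^1(\Omega)$. For the membership claim, $f\in W^{1,p}(\Omega)$ or $BV(\Omega)$ is inherited from Proposition \ref{co:exist2}(3); strong $L^2$-control follows from $\mathcal{S}(f)\leq \Fun{}{p}(f)\leq \Fun{}{p}(0)=\tfrac12\|\fd\|_2^2<\infty$, giving $f\in L^2(\Omega)$; and mean-value zero is preserved under $L^1$-limits because each $f_n$ has mean zero by Remark \ref{remark}, so $\int_\Omega f(x)\,dx = \lim_n \int_\Omega f_n(x)\,dx = 0$.

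The only real subtlety is checking that the Proposition's compactness (stated in $L^p$) is compatible with the $L^1$-topology in which $\Gamma$-convergence has been established; this is painless because $\Omega$ is bounded. Beyond that, the argument is essentially bookkeeping on top of the fundamental theorem of $\Gamma$-convergence, and I do not expect any genuine obstacle.
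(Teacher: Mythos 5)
Your argument is correct and follows essentially the same route as the paper: where the paper simply cites the standard property of $\Gamma^-$-convergence together with Proposition \ref{co:exist2} and the finiteness of $\mathcal{S}(f)$, you spell out the recovery-sequence/liminf bookkeeping, add the strict-convexity argument for uniqueness, and note the convergence of means, all of which match the paper's (terser) proof. The only blemish is your remark that a finite-energy competitor may be assumed to lie in $L^2_\diamond(\Omega)$ — finite energy forces $g \in L^2(\Omega)$ but not mean value zero — yet since that assumption is never used in your inequality chain, the proof stands as written.
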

\begin{proof}
 The limit function $f$ is a minimum of the limit functional $\mathcal{F}_p$ over $L^1(\Omega)$ due to the properties of $\Gamma^-$-convergence \cite{dal93}. It belongs to the space $\L^2(\Omega)$ since  $\mathcal{S}(f)$ is finite. Its mean is zero since it is the limit of the sequence $(f_n)$ from the closed subspace $\Lp{p}$ of $L^p(\Omega)$, and it belongs to $W^{1,p}$ if $p > 1$ and $BV(\Omega)$ if $p=1$ by the third assertion of Proposition \ref{co:exist2}.
\end{proof}
\section{Numerical Minimization of the Energy Functionals}

In this section we present some numerical schemes for minimiziation of the regularization functionals $\Reg{n}{1}$
in space dimensions one and two. We use a finite element approach and approximate functions by a linear combination of finite
elements, in particular piecewise constant and piecewise linear functions. Then, by tuning the kernel functions $\varphi_n$ we
are able to recover standard finite difference schemes for total variation minimization on the one hand and on the other hand
novel discrete schemes. A numerical comparison of the derived methods and applications to imaging will be studied in a forthcoming paper.
The numerical schemes are derived from an approximation of the total variation functional in an infinite dimensional setting, which are then 
discretized. These schemes can serve as alternatives to existing numerical schemes (see e.g. \cite{ChaLio97,AcaVog94,DobSch96,Vog02,JalCha09})
which are based on direct minimization of the Rudin-Osher-Fatemi functional and not on dual formulations, like the Chambolle's algorithm; 
see e.g. \cite{Cha04,AujCha05}.

\subsection{The One-dimensional Case}

We work on the domain
\[
  \Omega := (0,1).
\]
and consider minimization of $\Reg{n}{1}$ with a finite element method.

\begin{enumerate}
\item The first two schemes are for piecewise constant finite elements:
\begin{enumerate}
\item We use the sequence of kernel functions $(\varphi_n)$ defined by
\[
 \varphi_n := \frac{n}{2} \chi_{[-\frac{1}{n}, \frac{1}{n}]}.
\]
Let $a_1,\ldots,a_n \in \R$.
Evaluating the one-dimensional piecewise constant function
\[
 f_n := \sum_{i = 1}^n a_i \chi_{[\frac{i-1}{n}, \frac{i}{n}]}
\]
with $\Reg{n}{1}$ yields the standard $TV$-seminorm of $f_n$:
\[
 \Reg{n}{1}(f_n) = \sum_{i=2}^n |a_i - a_{i-1}| = |Df_n|.
\]
\item Using instead of $(\varphi_n)$ the family of kernels $(\varphi_n^{(2)})$ defined by
\[
 \varphi_n^{(2)} := \frac{n}{4} \chi_{[-\frac{2}{n}, \frac{2}{n}]}
\]
yields
\[
 \Reg{n}{1}(f_n) = \sum_{i=2}^{n-1} \frac{1-\ln(2)}{2} |a_{i+1} - a_{i-1}| + \sum_{i=2}^n \ln(2) |a_i - a_{i-1}|.
\]
We recall that $\ln(2) \approx 0.7$.
\end{enumerate}
\item Now, we consider a finite element method for piecewise linear splines. Let $a_0,\ldots, a_n \in \R$ and $f_n$ be the piecewise linear spline interpolating the nodes $(\frac{k}{n}, a_k), k = 0 \ldots n$, i.e.,
\[
 f_n := \sum_{i = 0}^n a_i  g_i
\]
where
\[
  g_i(x)  := \max\left(1 - n \abs{x - \frac{i}{n}},0\right).
\]

Inserting $f_n$ in $\Reg{n}{1}$ and using the kernel functions $\varphi_n$ yields
\[
 \Reg{n}{1}(f_n) = \sum_{i = 1}^n \frac{\abs{a_i - a_{i-1}}}{2} + \sum_{i = 1}^{n-1} t(a_{i-1},a_i,a_{i+1})
\]
where
\[
 t(a_{i-1},a_i,a_{i+1}) = \left\{ \begin{array}{cc}
\frac{|a_{i+1} - a_{i-1}|}{4} & \mathrm{if } \;  \sgn(a_{i-1} - a_i) = \sgn(a_i -  a_{i+1}),\\
          &  \\
 \frac{(a_{i} - a_{i-1})^2 + (a_i - a_{i+1})^2}{4(\abs{a_i - a_{i-1}} + \abs{a_i - a_{i+1}})} & \mathrm{if } \; \sgn(a_{i-1} - a_i) \ne \sgn(a_i -  a_{i+1}).

\end{array} \right.
\]
 We used the computer algebra program MAPLE for this evaluation. We provide a sketch of some parts of the  computation for $n \geq 2$. We want to evaluate
\[
 \int_0^1 \int_0^1 \frac{\abs{\sum_{i = 0}^n a_i  g_i(x) - \sum_{i = 0}^n a_i  g_i(y)}}{\abs{x-y}} \varphi_n(x-y) \; dx dy.
\]
which is equal to
\[
 \frac{n}{2} \sum_{k = 1}^n \int_\frac{k-1}{n}^\frac{k}{n} \int_{\max(0, y-\frac{1}{n})}^{\min(1, y+ \frac{1}{n})} \frac{\abs{\sum_{i = 0}^n a_i  g_i(x) - \sum_{i = 0}^n a_i  g_i(y) }}{\abs{x-y}} \; dx dy.
\]
Looking at the supports of the functions $g_i$ we realize that the latter double integral equals
\[
 \frac{n}{2} \left( \int_0^\frac{1}{n} \int_{0}^{y+ \frac{1}{n}} \frac{ \abs{\sum_{i = 0}^{2} a_i g_i(x)  -  \sum_{j=0}^1 a_j g_j(y)}}{\abs{x-y}} \; dx dy \right. +
\]
\[
  \sum_{k=2}^{n-1}  \int_{\frac{k-1}{n}}^\frac{k}{n} \int_{y-\frac{1}{n}}^{y+ \frac{1}{n}} \frac{\abs{\sum_{i = k-2}^{k+1} a_i g_i(x)  -  \sum_{j=k-1}^k a_j g_j(y) }}{\abs{x-y}} \; dx dy +
\]
\[
\left.  \int_{\frac{n-1}{n}}^1 \int_{y - \frac{1}{n}}^1 \frac{ \abs{\sum_{i = n-2}^{n} a_i g_i(x)  -  \sum_{j=n-1}^n a_j g_j(y)}}{\abs{x-y}} \; dx dy \right).
\]

We only treat the second double integral, the other two are evaluated analogously. Let $2 \leq k \leq n-1$.

Then
\[
\int_{\frac{k-1}{n}}^\frac{k}{n} \int_{y-\frac{1}{n}}^{y+ \frac{1}{n}} \frac{\abs{\sum_{i = k-2}^{k+1} a_i g_i(x)  -  \sum_{j=k-1}^k a_j g_j(y) }}{\abs{x-y}} \; dx dy
\]
can be decomposed into the sum
\[
 \int_{\frac{k-1}{n}}^\frac{k}{n} \int_{y-\frac{1}{n}}^{\frac{k-1}{n}} \frac{\abs{\sum_{i = k-2}^{k-1} a_i g_i(x)  -  \sum_{j=k-1}^k a_j g_j(y) }}{y-x} \; dx dy +
\]
\[
 \int_{\frac{k-1}{n}}^\frac{k}{n} \int_{\frac{k-1}{n}}^{\frac{k}{n}} \frac{\abs{\sum_{i = k-1}^{k} a_i g_i(x)  -  \sum_{j=k-1}^k a_j g_j(y) }}{\abs{x-y}} \; dx dy +
\]
\[
 \int_{\frac{k-1}{n}}^\frac{k}{n} \int_{\frac{k}{n}}^{y +\frac{1}{n}} \frac{\abs{\sum_{i = k}^{k+1} a_i g_i(x)  -  \sum_{j=k-1}^k a_j g_j(y) }}{x-y} \; dx dy.
\]
whose summands we denote by $I_{k,-}, I_{k}$ and $I_{k,+}$, respectively.
The integrand of $I_{k}$ is of the simple form 
\[
 I_{k} = {\frac {\abs{a_{{k}}-a_{{k-1}}}}{n}}.
\]

By an application of Fubini's theorem
\[
 I_{k,-} = I_{k-1,+}.
\]
 Thus, it suffices to treat the evaluation of $I_{k,-}$ whose integrand $J(x,y)$ is reshaped as follows:
\[\frac{\abs{
 \left(\! a_{{k-1}}-a_
{{k}} \right) \!n y \!+ \!\left( a_{{k-1}}-a_{{k-2}} \right) \!n x \!+ \!\left( a_{{k-2}}+a_{{k}}-2\,a_{{k-1}} \right) \! k \!  - \! a_{{k-2}} \! + \! 2\,
a_{{k-1}} \! - \! a_{{k}}}}{y-x}.
\]

From this representation it is already visible that the evaluation will be dependant from the sign of the differences
\[
 \Delta_k := a_k - a_{{k-1}} \quad  \mathrm{and} \quad \Delta_{k-1} = a_{{k-1}}-a_{{k-2}}.
\]
We treat here one instance of the more complex case when
\[
 \sgn(\Delta_k) \ne \sgn(\Delta_{k-1}),
\]
namely the subcase where the middle coefficient  $a_{k-1}$ is the maximum of the three coefficients; the other subcase where $a_{k-1}$ is the minimum can be treated just the same.
The less complex case where $a_{k-1}$ lies between $a_{k-2}$ and $a_k$ needs fewer case distinctions but apart from that can be treated analogously.
Note that in the chosen subcase the second difference
$
 \Delta^2_k = a_{{k-2}}-2\,a_{{k-1}} +a_{{k}}
$ is negative.

The numerator $J_N(x,y)$ of $J(x,y)$ now reads
\[
 J_N(x,y) = \abs{-\Delta_k n y + \Delta_{k-1} n x +  \Delta^2_k  (k -1)}
\]
and is positive if and only if
\[
  x > f_N(y) := \frac{\Delta_k n y  -  \Delta^2_k  (k -1)}{\Delta_{k-1} n}.
\]
Thus, to evaluate the inner integral of $I_{k,-}$ we have to determine the intersections
of its integration domain  $(y-\frac{1}{n},\frac{k-1}{n})$ with the intervals
$(f_N(y),+\infty)$ and $(-\infty,f_N(y))$, respectively, for all $y \in (\frac{k-1}{n}, \frac{k}{n})$, which is the integration domain of the outer integral.

We get that
\[
 f_N(y)  < \frac{k-1}{n} \quad \Longleftrightarrow \quad  \frac{k-1}{n} < y
\]
and
\[
 y- \frac{1}{n} <  f_N(y) \quad \Longleftrightarrow \quad y < \frac{1}{n}\left(k  -  \frac{\Delta_k}{\Delta^2_k} \right).
\]

Let $C := \frac{1}{n}\left(k  -  \frac{\Delta_k}{\Delta^2_k}\right)$. Note that $C < \frac{k}{n}$. Then
\[
 I_{k,-} = - \int_\frac{k-1}{n}^C \int_{ y- \frac{1}{n}}^{f_N(y)} J(x,y) \; dx dy + \int_\frac{k-1}{n}^C \int_{f_N(y)}^\frac{k-1}{n} J(x,y) \; dx dy +
\]
\[
 \int_C^\frac{k}{n} \int_{ y- \frac{1}{n}}^{\frac{k-1}{n}} J(x,y) \; dx dy.
\]

Integrating $J(x,y)$ with respect to $x$ yields the primitive function
\[
 K(x,y) := \left((ny - k + 1 ) \Delta^2_k\right)\ln(y-x) - x n \Delta_{k-1}.
\]

Inserting the limits of the inner integral of the third summand yields
\[
 K\left(y-\frac{1}{n}, y\right) = \left((k - 1 - ny) \Delta^2_k\right)\ln(n) - (ny - 1)\Delta_{k-1},
\]
\[
 K\left(\frac{k-1}{n},y\right) := \left((ny - k + 1 ) \Delta^2_k\right)\ln\left(y- \frac{k-1}{n}\right) - (k-1) \Delta_{k-1}
\]
and
\[
L_3(y) \! := \! \int_{ y- \frac{1}{n}}^{\frac{k-1}{n}} \! \! J(x,y) \; dx = \left((ny - k + 1) \Delta^2_k\right)\ln(ny-k+1) \! +\! (ny - k) \Delta_{k-1}.
\]

A primitive function for $L_3$  is
\[
 M_3(y) \!:= \! {\frac { \left( ny \!- \! k + 1 \right)^{2} \! \Delta^2_k }{2n}}\!\ln(n y - k + 1) - \! \frac{1}{4n} \!\left(n y - k + 1\right)^2 \!\Delta^2_k + \frac{1}{2}y(nk - 2y) \Delta_{k-1}.
\]

Analogous computations for the first and second summand yield the functions
\[
 L_2(y)  = \left( ny+1-k \right)\left(\Delta_k^2 \ln\left(- \frac{\Delta_{k-1}}{\Delta^2_k} \right)  +\Delta_k \right)
\]
and
\[
 M_2(y) = \frac{1}{2}\,{y}( ny -2k +2)\left( \Delta_k^2 \ln  \left(
-\frac{\Delta_{k-1}}{\Delta_k^2} \right) +
\Delta_k \right)
\]
and
\[
L_1(y) = \left(-\Delta_k^2 \right)  \left( ny-k+1 \right)
 \left( \ln  \left(-\frac{\Delta_{k}^2  (ny-k+1)}{\Delta_{k-1}} \right)  -1
 \right) +\Delta_{k-1}
\]
and
\[
M_1(y) = -\,\frac{\Delta_k^2}{2}   \left( y \left( -
ny-2+2\,k \right) \ln  \left( -\frac{\Delta_{k-1}}{\Delta_k^2} \right) \! + \! {\frac { \left( ny-k+1 \right) ^{2}
\ln  \left( ny-k+1 \right) }{n}}\right.
\]
\[ \left.
-{\frac { \left( ny-k+1 \right) ^
{2}+2\,ny \left( ny-2\,k \right) }{2n}} \right) + \left( a_{{k}}-a_{{k-
1}} \right) y,
 \]
respectively.

Now
\begin{eqnarray*}
 I_{k,-} & = & M_1\left(\frac{k-1}{n}\right) - M_1(C) + M_2(C) - M_2\left(\frac{k-1}{n}\right) + M_3\left(\frac{k}{n}\right) - M_3(C) \\
& = &  {\frac { \left( a_{{k-1}}-a_{{k}} \right) ^{2}+ \left( a_{{k-1}}-
a_{{k-2}} \right) ^{2}}{4n{\Delta_{{k}}}^{2}}}.
\end{eqnarray*}

From this result and the results from above the final result follows easily.

We further evaluated $\Reg{n}{1}$ for the Haar-functions $h_j^{(k)}$ using the kernel functions $(\varphi_n)$. For $j = k = 0$ the function $h_j^{(k)}$ is defined by
\[
  h_0^{(0)}(x) := 1
\]
for all $x \in \Omega$. For $k \in \N_{0}, 1\leq j \leq 2^k$ we have
\[
  h_j^{(k)}(x) := \left\{ \begin{array}{cc}
\sqrt{2^k} & \mathrm{if } \;  x \in \left(\frac{2j-2}{2^{k+1}},\frac{2j-1}{2^{k+1}}\right),\\
          &  \\
 -\sqrt{2^k}& \mathrm{if } \;  x \in \left(\frac{2j-1}{2^{k+1}},\frac{2j}{2^{k+1}}\right), \\
& \\
0 & \mathrm{otherwise}. \end{array} \right.
\]
Since $h_0^{(0)}$ is constant
\[
 \Reg{n}{1}\left(h_0^{(0)}\right) = 0.
\]

For $h_1^{(0)}$ we get
\[
  \Reg{n}{1}\left(h_0^{(1)}\right) = \left\{ \begin{array}{cc}
2 \ln(2)& \mathrm{if } \;  n = 1,\\
          &  \\
 2 & \mathrm{if } \;  n > 1.\end{array} \right.
\]

Note that by symmetry for $k \geq 1$ and $1 \leq j \leq 2^{k-1}$
\begin{equation}
\label{haar}
 \Reg{n}{1}\left(h_j^{(k)}\right) = \Reg{n}{1}\left(h_{2^k-j +1}^{(k)}\right).
\end{equation}

For $ k \geq 1 $ the marginal functions $h_1^{(k)}$  evaluate to
\[
 \Reg{n}{1}\left(h_1^{(k)}\right) = \left\{ \begin{array}{ll}
 \sqrt{2^k} \left( \left(k+ \frac{1}{2^{k-1}}\right) \ln(2)  - \left(1 - \frac{1}{2^{k}}  \right) \ln\left(2^k -1\right)\right) & \mathrm{if } \;  n = 1,\\
          &  \\
 \frac{n}{\sqrt{2^k}}\left( \left(k+2\right)\ln(2) - \ln(n) +1\right) & \mathrm{if } \;  2 \leq n \leq 2^k, \\
 & \\
  \sqrt{2^k} n  \left(\frac{k+1}{2^{k-1}} \ln(2) - \frac{1}{2^{k-1}} \ln(n) + \frac{1}{2^{k-1}} - \frac{1}{n}\right) & \mathrm{if }  \,  2^k \! \leq n \! \leq 2^{k+1}\!,  \\
 & \\
 3 \sqrt{2^k} & \mathrm{if } \;   n \geq 2^{k+1}.\end{array} \right.
\]

For $k \geq 2$ and $j = 2,\ldots,2^{k-1}$ the inner functions $h_j^{(k)}$,  evaluate to

\[\Reg{n}{1}\left(h_j^{(k)}\right) = \left\{ \begin{array}{ll}
\sqrt {{2}^{k}} \left( {\frac {j\ln  \left( j \right) }{{2}^{k}}}-{
\frac { \left( j-1 \right) \ln  \left( j-1 \right) }{{2}^{k}}} - \right. & \\
 \left.  \left( 1-{\frac {j}{{2}^{k}}} \right) \ln  \left( {2}^{k}-j \right) + \right. & \\
 \left. \left( 1-{\frac {j-1}{{2}^{k}}} \right) \ln  \left( {2}^{k}-j+1
 \right) +{\frac {\ln  \left( 2 \right) }{{2}^{k-1}}} \right)  & \mathrm{if } \;  n = 1,
  \\
\\
\frac{n}{\sqrt {{2}^{k}}} \left( j\ln  \left( j \right) +
 \left( j-1 \right) \ln  \left( j-1 \right) + \right.& \\
  \left. \left( k+2 \right) \ln
 \left( 2 \right) -
\ln  \left( n \right)  +  1  \right) & \mathrm{if } \;  \frac{2^k}{2^k -j}
\leq n \leq \frac{2^k}{j}, \\
\\
n\sqrt {{2}^{k}} \left(   {\frac { \left( j-1 \right) \ln  \left( j-1
 \right) }{{2}^{k}}}       -{\frac { \left( j+1 \right) \ln  \left( n
 \right) }{{2}^{k}}}- \right.& \\
\left. {\frac { \left( kj+k+2 \right) \ln  \left( 2
 \right) }{{2}^{k}}} +{\frac {j+1}{{2}^{k}}}-\frac{1}{n} \right)
& \mathrm{if } \; \frac{2^k}{j}
\leq n \leq \frac{2^k}{j-1}, \\
\\
\frac{2n}{\sqrt{2^k}} \left( \left( k+1 \right)
\ln  \left( 2 \right)  -\ln  \left( n \right) +1  \right) & \mathrm{if } \; \frac{2^k}{j-1}
\leq n \leq {2^{k+1}}, \\
\\
4 \sqrt{2^k} & \mathrm{if } \; n \geq 2^{k+1}.
\end{array} \right.
\]
The evaluation of $h_j^{(k)}$ for $j = 2^{k-1}+1,\ldots, 2^k$ is reduced to the evaluations directly above via (\ref{haar}).

\end{enumerate}

\subsection{The Two-dimensional Case}

We now switch to the two-dimensional case and evaluate $\Reg{n}{1}$ for a piecewise constant function defined on a subset of $\R^2$. In detail, let  $\Omega$ be chosen as the open square $(0,1) \times (0,1)$ and $f$ be defined on $\Omega$ via
\[
 f(\vec{v})  := \sum_{i,j =1}^n a_{i,j}  \chi_{I_{i,j}}(\vec{v})
\]
for all $\vec{v} \in \Omega$ where $I_{k,l} := I_k \times I_l$ with $I_k := (\frac{k-1}{n}, \frac{k}{n})$ for all $k,l \in \{1,\ldots, n\}$.

\begin{enumerate}
\item
We choose the kernel functions
\[
 \varphi_n(\vec{v}) := \frac{n^2}{\pi} \chi_{B\left(0,\frac{1}{n}\right)}(\vec{v})
\]
for all $\vec{v}$ in $\R^2$ where $B\left(0,\frac{1}{n}\right)$ denotes the ball around the origin with radius $\frac{1}{n}$. The sequence $(\varphi_n)$ satisfies all conditions stated in the introduction. We note that for two points $(x,y)$ and $(w,z)$ from $\Omega$ we have $(x,y) - (w,z) \in B\left(0,\frac{1}{n}\right)$ if and only if
\[
  (w,z) \in S_\Omega\left(x,y,\frac{1}{n}\right) := \left((0,1) \times (0,1)\right) \cap B\left((x,y), \frac{1}{n}\right).
\]
We further define the intersection of the circle $B((x,y),\frac{1}{n})$ with the square $I_{k,l}$ by
\[
 S_{k,l}\left(x,y,\frac{1}{n}\right) := I_{k,l} \cap B\left((x,y), \frac{1}{n}\right).
\]

We have to evaluate
\[
 \Reg{n}{1}(f) = \int_\Omega \int_\Omega \frac{\abs{f(x,y) - f(w,z)}}{\abs{(x,y) - (w,z)}} \varphi((x,y) - (w,z)) \;  d(w,z) \, d(x,y)
\]
\[
= \frac{n^2}{\pi} \int_0^1 \! \int_0^1 \! \int \!\!\!\int_{S_\Omega\left(x,y,\frac{1}{n}\right)} \!\! \!\! \!\!\!\!
\frac{\abs{ \sum_{i,j =1}^n a_{i,j}  \chi_{I_{i,j}}(x,y)- \sum_{k,l =1}^n a_{k,l}  \chi_{I_{k,l}}(w,z)}}{\abs{(x- w, y -z)}} \, d(w,z) \; dy  dx.
\]
The occurring quadruple integral can be rewritten as follows:
\[
 \sum_{i,j =1}^n \sum_{k,l =1}^n \int_{\frac{i-1}{n}}^\frac{i}{n} \! \int_{\frac{j-1}{n}}^\frac{j}{n} \! \int \!\!\!\int_{S_{k,l}\left(x,y,\frac{1}{n}\right)} \!\!
\frac{\abs{  a_{i,j}  -  a_{k,l}}}{\abs{(x- w, y -z)}}  \; d(w,z)\, dy  dx.
\]

For fixed $1 \leq i,j \leq n$ and fixed $x,y \in I_{i,j}$ the domain $S_{k,l}\left(x,y,\frac{1}{n}\right)$ of the inner double integral is empty if $\abs{i-k} \geq 2$ or $\abs{j-l} \geq 2$. Thus,  it suffices to evaluate those summmands of the inner sum above that fulfill  $\abs{i-k} \leq 1$ and $\abs{j-l} \leq 1$. However, in the case where $i = k$ and $ j = l$ the integrand of the corresponding summand vanishes such that this case may be left out, too.

Given a pair of indices $(i,j)$ let
\[
\mathcal{I}_{i ,j} := \{(k,l) \in \{1,\ldots,n\}^2 \; :\; \abs{i-k} \leq 1, \abs{j-l} \leq 1, (k,l) \ne (i,j) \}.
\]
denote the set of pairs of indices for which the corresponding summands of the inner sum in the integral expression above do  not vanish generally.

Then the above quadruple integral equals
\[
 \sum_{i,j =1}^n  \sum_{(k,l) \in \mathcal{I}_{i,j}}\abs{  a_{i,j}  -  a_{k,l}} \int_{\frac{i-1}{n}}^\frac{i}{n} \! \int_{\frac{j-1}{n}}^\frac{j}{n} \! \int \!\!\!\int_{S_{k,l}\left(x,y,\frac{1}{n}\right)} \!\!
\frac{1}{\abs{(x- w, y -z)}}  \; d(w,z)\, dy  dx.
\]
We denote for all $1 \leq i,j \leq n$ and all $(k,l) \in \mathcal{I}_{i,j}$  the quadruple integral expression on the right hand side above with $J_{i,j}^{k,l}$, i.e.
\[
 J_{i,j}^{k,l} := \int_{\frac{i-1}{n}}^\frac{i}{n} \! \int_{\frac{j-1}{n}}^\frac{j}{n} \! \int \!\!\!\int_{S_{k,l}\left(x,y,\frac{1}{n}\right)} \!\!
\frac{1}{\abs{(x- w, y -z)}}  \; d(w,z)\, dy  dx.
\]

Again, let $1 \leq i,j \leq n$ be fixed. The set of pairs of indices with non-vanishing summands $\mathcal{I}_{i ,j}$ may be partitioned into the sets
\[
\mathcal{I}_{i ,j}^d := \{(k,l) \in \mathcal{I}_{i ,j} \; : \; k \ne i \; \mathrm{and} \; l \ne j \}
\]
of pairs of indices marking squares diagonally adjacent to the square $I_{i,j}$ and
\[
 \mathcal{I}_{i ,j}^l := \{(k,l) \in \mathcal{I}_{i ,j} \; : \; k = i \; \mathrm{or} \; l = j \}
\]
collecting pairs of indices that denote squares laterally adjacent to $I_{i,j}$. By simple transformations of the kind $(x,w) \mapsto (x \pm \frac{1}{n}, w \pm \frac{1}{n})$ etc. and applications of Fubini's theorem (as in the one-dimensional case) (or by geometric insight) we realize that the integrals $J_{i,j}^{k,l}$ are equal for all $(k,l) \in \mathcal{I}_{i ,j}^d$ and the same holds true for all $(k,l) \in \mathcal{I}_{i ,j}^l$. Further, the respective values of the two evaluations  are independent of $i$ and $j$.

Thus, it suffices to compute the values of $J_{i_*,j_*}^{i_*-1,j_*-1}$ and $J_{i_*,j_*}^{i_*,j_*-1}$ for some fixed $2 \leq i_*,j_* \leq n$, and the final result will be
\[
 \Reg{n}{1}(f) = \frac{2 n^2}{\pi} \left( \! \left(\sum_{i=2}^n\sum_{j=2}^n\left|a_{i,j} - a_{i-1,j-1}\right| + \sum_{i=1}^{n-1} \sum_{j =2}^{n} \left|a_{i,j} - a_{i+1,j-1}\right| \!\right) \! \!J_{i_*,j_*}^{i_*-1,j_*-1} + \right.
\]
\begin{equation}
\label{result}
\left. \left( \sum_{i=1}^n\sum_{j=2}^n \left|a_{i,j} - a_{i,j-1}\right| + \sum_{i=2}^{n} \sum_{j =1}^{n} \left|a_{i,j} - a_{i-1,j}\right| \right) J_{i_*,j_*}^{i_*,j_*-1}\right).
\end{equation}

We begin with the more complex case of laterally adjacent squares and evaluate
\[
 J_{i,j}^{i,j-1} = \int_{\frac{i-1}{n}}^\frac{i}{n} \! \int_{\frac{j-1}{n}}^\frac{j}{n} \! \int \!\!\!\int_{S_{i,j-1}\left(x,y,\frac{1}{n}\right)} \!\!
\frac{1}{\abs{(x- w, y -z)}}  \; d(w,z)\, dy  dx.
\]
for some fixed $2 \leq i,j \leq n$.

We first point out that
\[
J_\frac{1}{2} := \int_{\frac{i-1}{n}+ \frac{1}{2n}}^\frac{i}{n} \! \int_{\frac{j-1}{n}}^\frac{j}{n} \! \int \!\!\!\int_{S_{i,j-1}\left(x,y,\frac{1}{n}\right)} \!\!
\frac{1}{\abs{(x- w, y -z)}}  \; d(w,z)\, dy  dx =
\]
\[ \int_{\frac{i-1}{n}}^{\frac{i-1}{n} + \frac{1}{2n}}\! \int_{\frac{j-1}{n}}^\frac{j}{n} \! \int \!\!\!\int_{S_{i,j-1}\left(x,y,\frac{1}{n}\right)} \!\!
\frac{1}{\abs{(x- w, y -z)}}  \; d(w,z)\, dy  dx
\]
such that
\[
 J_{i,j}^{i,j-1} = 2 J_\frac{1}{2}.
\]
This can be established by application of the transformations $(x,w) \mapsto (\frac{2i-1}{n}  -x, \frac{2i-1}{n} - w)$.

Let $(x,y)$ be chosen from $I_{i,j}$ with $x \geq \frac{i-1}{n}+ \frac{1}{2n}$. We analyze the inner double integral
\[
 \int \!\!\!\int_{S_{i,j-1}\left(x,y,\frac{1}{n}\right)} \!\!
\frac{1}{\abs{(x- w, y -z)}}  \; d(w,z)
\]
of $J_\frac{1}{2}$.
We use the abbreviations
\[
 a := x - \frac{i-1}{n}, \qquad   b  := y - \frac{j-1}{n} \qquad \mathrm{and}  \qquad d  :=  x - \frac{i}{n}.
\]
for the distances of $x$ and $y$ to some nodes. Note that by our choice of $x$ and $y$ we have the inequalities $a,b > 0$, $ d < 0$ and, in particular, $ a > |d|$.

Let $(w,z) \in S_{i,j-1}\left(x,y,\frac{1}{n}\right)$. It follows that
\[
  (x-w)^2 < \frac{1}{n^2} - (y-z)^2.
\]
and therefore,
\[
 z > y - \frac{1}{n}   \qquad \mathrm{and} \qquad  x - \sqrt{\frac{1}{n^2} - (y-z)^2} < w < x + \sqrt{\frac{1}{n^2} - (y-z)^2}.
\]
Thus, $z \in \left(y - \frac{1}{n}, \frac{j-1}{n}\right)$ and we have to analyze the intersection of intervals
\begin{equation}
\label{equiv1}
 I_w :=  \left(\frac{i-1}{n}, \frac{i}{n}\right) \cap \left(x - \sqrt{\frac{1}{n^2} - (y-z)^2},x + \sqrt{\frac{1}{n^2} - (y-z)^2}\right).
\end{equation}
(The index $w$ in  the symbol $I_w$ is just used as a symbol to indicate that we are dealing with the integration domain of the variable $w$ but does not stand for the values of $w$. The same applies to $I_z$ etc. below.)
We first point out that
\begin{equation}
\label{equiv3}
 \frac{i-1}{n} < x - \sqrt{\frac{1}{n^2} - (y-z)^2}  \; \Longleftrightarrow \; z < y - \sqrt{\frac{1}{n^2} - a^2}.
\end{equation}
Thus the result of (\ref{equiv1}) is dependent from the intersection
\begin{equation}
\label{equiv2}
I_z := \left(y - \frac{1}{n}, \frac{j-1}{n}\right) \cap \left(-\infty, y - \sqrt{\frac{1}{n^2} - a^2}\right).
\end{equation}
While it is clear that $y - \sqrt{\frac{1}{n^2} - a^2} > y - \frac{1}{n}$ we have
\begin{equation}
 \label{equiv5}
y - \sqrt{\frac{1}{n^2} - a^2} < \frac{j-1}{n} \; \Longleftrightarrow \; y < \frac{j-1}{n} + \sqrt{\frac{1}{n^2} - a^2}.
\end{equation}
with $\frac{j-1}{n} < \frac{j-1}{n} + \sqrt{\frac{1}{n^2} - a^2} < \frac{j}{n}$.

We first consider the case that $y \in I_y := \left(\frac{j-1}{n}, \frac{j-1}{n} + \sqrt{\frac{1}{n^2} - a^2}\right)$. Then $I_z = \left(y - \frac{1}{n},y - \sqrt{\frac{1}{n^2} - a^2}\right)$.

We look at the subcase $ z \in I_z$. It follows by (\ref{equiv3}) that now the lower bound of $I_w$ is $ x - \sqrt{\frac{1}{n^2} - (y-z)^2}$.

Considering its upper bound we have to find the minimum of \\ $x + \sqrt{\frac{1}{n^2} - (y-z)^2}$ and $\frac{i}{n}$. Similarly to (\ref{equiv3}) we get that
\begin{equation}
\label{equiv4}
 \frac{i}{n} > x + \sqrt{\frac{1}{n^2} - (y-z)^2}  \; \Longleftrightarrow \; z < y - \sqrt{\frac{1}{n^2} - d^2}.
\end{equation}
Since $a^2 > d^2$ it is true that $y - \sqrt{\frac{1}{n^2} - d^2} \in I_z$ implying that the treated subcase has two more subsubcases: $z \in I_{z,1} := \left(y - \frac{1}{n},y - \sqrt{\frac{1}{n^2} - d^2}\right)$ and $z \in I_{z,2} := \left(y - \sqrt{\frac{1}{n^2} - d^2},y - \sqrt{\frac{1}{n^2} - a^2} \right)$.

By (\ref{equiv4}) and (\ref{equiv1})   the upper bound of $I_w$ is $ x + \sqrt{\frac{1}{n^2} - (y-z)^2}$
if $z \in I_{z,1}$  and equals $ \frac{i}{n}$ if $z \in I_{z,2}$.
Thus, the treated subcase gives rise to the following two quadruple integrals
\[
 \int_{\frac{i-1}{n}+ \frac{1}{2n}}^\frac{i}{n} \! \int_{\frac{j-1}{n}}^{\frac{j-1}{n} + \sqrt{\frac{1}{n^2} - a^2}}\! \int_{y - \frac{1}{n}}^{ y - \sqrt{\frac{1}{n^2} - d^2}}\! \int_{ x - \sqrt{\frac{1}{n^2} - (y-z)^2}}^{x + \sqrt{\frac{1}{n^2} - (y-z)^2}}
\frac{1}{\abs{(x- w, y -z)}}  \; dw dz dy  dx
\]
\[
 \int_{\frac{i-1}{n}+ \frac{1}{2n}}^\frac{i}{n} \! \int_{\frac{j-1}{n}}^{\frac{j-1}{n} + \sqrt{\frac{1}{n^2} - a^2}}\! \int_{y - \sqrt{\frac{1}{n^2} - d^2}}^{ y - \sqrt{\frac{1}{n^2} - a^2}}\! \int_{ x - \sqrt{\frac{1}{n^2} - (y-z)^2}}^{ \frac{i}{n}}
\frac{1}{\abs{(x- w, y -z)}}  \; dw dz  dy  dx
\]
which we denote by $K_1$ and $K_2$, respectively.

We turn to the subcase that $z \notin I_z$, i.e. $z \in \left(y - \sqrt{\frac{1}{n^2} - a^2},\frac{j-1}{n}\right)$. By (\ref{equiv3}) the lower bound of $I_w$ is in this subcase $\frac{i-1}{n}$.
Since $y - \sqrt{\frac{1}{n^2} - a^2} > y - \sqrt{\frac{1}{n^2} - d^2}$ by (\ref{equiv4})  in this subcase the upper bound of $I_w$ is $\frac{i}{n}$. This subcase yields the quadruple integral
\[
K_3 :=  \int_{\frac{i-1}{n}+ \frac{1}{2n}}^\frac{i}{n} \! \int_{\frac{j-1}{n}}^{\frac{j-1}{n} + \sqrt{\frac{1}{n^2} - a^2}}\! \int_{y - \sqrt{\frac{1}{n^2} - a^2}}^{\frac{j-1}{n}}\! \int_{\frac{i-1}{n}}^{ \frac{i}{n}}
\frac{1}{\abs{(x- w, y -z)}}  \; dw dz  dy  dx.
\]

We still need to analyze the case $y \notin I_y$, that is, $y \in \left(\frac{j-1}{n} + \sqrt{\frac{1}{n^2} - a^2}, \frac{j}{n}\right)$. In this case by (\ref{equiv5}) and (\ref{equiv2}) the interval $I_z$ equals $\left(y - \frac{j-1}{n}, \frac{j-1}{n}\right)$.By (\ref{equiv5}) and (\ref{equiv2}) it is clear that the lower bound of $I_w$ is $x - \sqrt{\frac{1}{n^2} - (y-z)^2}$. The determination of the upper bound is a little more intricate including two subcases concerning the choice of the domain of $y$ one of which generating two subsubcases concerning the domain of $z$. However, its computation processes similarly enough to the computations in the first case that we skip it here and just state the resulting quadruple integrals which we name $K_4, K_5$ and $K_6$, respectively.
\[
 \int_{\frac{i-1}{n}+ \frac{1}{2n}}^\frac{i}{n} \! \int_{\frac{j-1}{n}+ \sqrt{\frac{1}{n^2} - a^2}}^{\frac{j-1}{n} + \sqrt{\frac{1}{n^2} - d^2}}\! \int_{y - \frac{1}{n}}^{ y - \sqrt{\frac{1}{n^2} - d^2}}\! \int_{ x - \sqrt{\frac{1}{n^2} - (y-z)^2}}^{x + \sqrt{\frac{1}{n^2} - (y-z)^2}}
\frac{1}{\abs{(x- w, y -z)}}  \; dw dz dy  dx,
\]
\[
 \int_{\frac{i-1}{n}+ \frac{1}{2n}}^\frac{i}{n} \! \int_{\frac{j-1}{n}+\sqrt{\frac{1}{n^2} - a^2}}^{\frac{j-1}{n} + \sqrt{\frac{1}{n^2} - d^2}}\! \int_{y - \sqrt{\frac{1}{n^2} - d^2}}^{ \frac{j-1}{n}}\! \int_{ x - \sqrt{\frac{1}{n^2} - (y-z)^2}}^{ \frac{i}{n}}
\frac{1}{\abs{(x- w, y -z)}}  \; dw dz  dy  dx,
\]
\[
  \int_{\frac{i-1}{n}+ \frac{1}{2n}}^\frac{i}{n} \! \int_{\frac{j-1}{n}+ \sqrt{\frac{1}{n^2} - d^2}}^{\frac{j}{n} }\! \int_{y - \frac{1}{n}}^{\frac{j-1}{n}}\! \int_{ x - \sqrt{\frac{1}{n^2} - (y-z)^2}}^{x + \sqrt{\frac{1}{n^2} - (y-z)^2}}
\frac{1}{\abs{(x- w, y -z)}}  \; dw dz  dy  dx.
\]
Altogether,
\[
  J_\frac{1}{2} = \sum_{i=1}^6 K_i.
\]

The evaluation of the six quadruple integrals $K_i$ involves the transformation of the respective  inner double integrals to polar coordinates. In order to simplify this procedure we first translate the integration domain of the respective inner double integral to the rectangle $(-\frac{1}{n}, \frac{1}{n}) \times (0, \frac{1}{n})$. In all six cases given a point $(x,y)$ from the domain of the respective outer double integral this is done by application of the transformation $(w,z) \mapsto ( x- w, y- z)$. Let $L_i$ be the result of this application to $K_i$. Then
\begin{eqnarray*}
 L_1 & = &
 \int_{\frac{i-1}{n}+ \frac{1}{2n}}^\frac{i}{n} \! \int_{\frac{j-1}{n}}^{\frac{j-1}{n} + \sqrt{\frac{1}{n^2} - a^2}}\!
\int^{\frac{1}{n}}_{\sqrt{\frac{1}{n^2} - d^2}}\! \int^{ \sqrt{\frac{1}{n^2} - z^2}}_{- \sqrt{\frac{1}{n^2} - z^2}}
\frac{1}{\abs{(w, z)}}  \; dw dz dy  dx,\\
L_2 &  = & \int_{\frac{i-1}{n}+ \frac{1}{2n}}^\frac{i}{n} \! \int_{\frac{j-1}{n}}^{\frac{j-1}{n} + \sqrt{\frac{1}{n^2} - a^2}}\!
\int^{\sqrt{\frac{1}{n^2} - d^2}}_{ \sqrt{\frac{1}{n^2} - a^2}}\! \int^{ \sqrt{\frac{1}{n^2} - z^2}}_{d}
\frac{1}{\abs{(w, z)}}  \; dw dz  dy  dx,\\
L_3 & = &  \int_{\frac{i-1}{n}+ \frac{1}{2n}}^\frac{i}{n} \! \int_{\frac{j-1}{n}}^{\frac{j-1}{n} + \sqrt{\frac{1}{n^2} - a^2}}\! \int^{\sqrt{\frac{1}{n^2} - a^2}}_{b}\! \int^{a}_{d}
\frac{1}{\abs{(w, z)}}  \; dw dz  dy  dx,\\
 L_4 & = & \int_{\frac{i-1}{n}+ \frac{1}{2n}}^\frac{i}{n} \! \int_{\frac{j-1}{n}+ \sqrt{\frac{1}{n^2} - a^2}}^{\frac{j-1}{n} + \sqrt{\frac{1}{n^2} - d^2}}\!
\int^{\frac{1}{n}}_{ \sqrt{\frac{1}{n^2} - d^2}}\! \int^{ \sqrt{\frac{1}{n^2} - z^2}}_{ -\sqrt{\frac{1}{n^2} - z^2}}
\frac{1}{\abs{(w, z)}}  \; dw dz dy  dx,\\
 L_5 & = & \int_{\frac{i-1}{n}+ \frac{1}{2n}}^\frac{i}{n} \! \int_{\frac{j-1}{n}+\sqrt{\frac{1}{n^2} - a^2}}^{\frac{j-1}{n} + \sqrt{\frac{1}{n^2} - d^2}}\!
\int^{\sqrt{\frac{1}{n^2} - d^2}}_{b}\! \int^{ \sqrt{\frac{1}{n^2} - z^2}}_{d}
\frac{1}{\abs{(w, z)}}  \; dw dz  dy  dx,\\
 L_6 & = & \int_{\frac{i-1}{n}+ \frac{1}{2n}}^\frac{i}{n} \! \int_{\frac{j-1}{n}+ \sqrt{\frac{1}{n^2} - d^2}}^{\frac{j}{n} }\! \int^{\frac{1}{n}}_{b}\! \int^{ \sqrt{\frac{1}{n^2} - z^2}}_{ - \sqrt{\frac{1}{n^2} - z^2}}
\frac{1}{\abs{(w, z)}}  \; dw dz  dy  dx.
\end{eqnarray*}

Let for $1 \leq i \leq 6$ the function $F_i$ be the evaluation function of the inner double integral of  $L_i$ defined on the domain of the outer double integral of $L_i$.

In $L_1, L_4$ and $L_6$  the integration domain of the inner double integral is a segment of the circle $B(0,\frac{1}{n})$ that results from the intersection of that circle with a parallel to the $x$-axis. A straight-forward transformation to polar coordinates $(r,\phi)$
yields
for example for $F_1$:
\begin{eqnarray*}
F_1(x,y)&  = &\int_{\sqrt{\frac{1}{n^2} - d^2}}^\frac{1}{n} \int_{\arcsin\left(\frac{\sqrt{\frac{1}{n^2} - d^2}}{r}\right)}^{\arcsin\left(-\frac{\sqrt{\frac{1}{n^2} - d^2}}{r}\right) + \pi} \; d\phi
dr \\
& = &
\int_{\sqrt{\frac{1}{n^2} - d^2}}^\frac{1}{n} \pi -2 \arcsin\left(-\frac{\sqrt{\frac{1}{n^2} - d^2}}{r}\right)  \;
dr \\
& = & \pi \,r
-2\,r\arcsin \left( {\frac {\sqrt {\frac{1}{n^2}-{d}^{2}}}{r}} \right) -\\
& & 2\,
\sqrt {\frac{1}{n^2}-{d}^{2}}\; { \mathrm{arcoth}} \left( {\frac {r}{\sqrt {{r}^{2}-
\frac{1}{n^2}+{d}^{2}}}} \right) \Bigg|_{\sqrt{\frac{1}{n^2} - d^2}}^\frac{1}{n} \\
& = & {\frac {\pi }{n}}
-2\,{\frac {\arcsin \left( \sqrt {\frac{1}{n^2} -{d}^{2}} n \right) }{n}}+\\
& & \sqrt{\frac{1}{{n}^{2}}-{d}^{2}} \left(\ln  \left(\frac{1}{n} +d  \right) -\ln  \left(\frac{1}{n} -d \right) \right),
\end{eqnarray*}
and $F_4$ and $F_6$ are treated analogously.

The integration domain of $F_3$ is a rectangle with edges parallel to the axes stretching across  both quadrants of the upper half plane. In order to transform this domain to polar coordinates we split it along the $y$-axis in two
axis-parallel rectangles that reside in the second and first quadrant, respectively,
\[
F_3(x,y) = \int^{\sqrt{\frac{1}{n^2} - a^2}}_{b}\! \int^{0}_{d}
\frac{1}{\abs{(w, z)}} \; dw dz + \int^{\sqrt{\frac{1}{n^2} - a^2}}_{b}\! \int^{a}_{0} \frac{1}{\abs{(w, z)}} \; dw dz,
\]
and call the resulting double integrals $A(x,y)$ and $B(x,y)$.

We turn to the computation of $A(x,y)$.
The transformation of a rectangle domain located in the second quadrant to polar coordinates depends on  whether its bottom left vertex or its top right vertex is more distant from the origin. In the case of $A(x,y)$ this conditions reads
\begin{equation}
 \label{equiv6}
 |(d,b)| < \left|\left(0,\frac{1}{n^2} - a^2\right)\right|.
\end{equation}

In the case where (\ref{equiv6}) holds true the transformation to polar coordinates yields
\[
  A(x,y) = \int_{b}^{|(d,b)|} \int_\frac{\pi}{2}^{-\arcsin\left(\frac{b}{r}\right) + \pi} \; d\phi dr + \int_{|(d,b)|}^{\left|\left(0,\frac{1}{n^2} - a^2\right)\right|}  \int_\frac{\pi}{2}^{\arccos\left(\frac{d}{r}\right)} \; d\phi dr +
\]
\[ \int_{\left|\left(0,\frac{1}{n^2} - a^2\right)\right|}^{\left|\left(d,\sqrt{\frac{1}{n^2}-a^2}\right)\right|} \int_{-\arcsin\left(\frac{\sqrt{\frac{1}{n^2} - a^2}   }{r} \right)+ \pi}^{\arccos\left(\frac{d}{r}\right)} \; d\phi dr,
\]
in the opposite case $A(x,y)$ equals
\[
 \int_{b}^{\left|\left(0,\frac{1}{n^2} - a^2\right)\right|} \int_\frac{\pi}{2}^{-\arcsin\left(\frac{b}{r}\right) + \pi} \; d\phi dr + \int_{\left|\left(0,\frac{1}{n^2} - a^2\right)\right|}^{|(d,b)|}  \int_{-\arcsin\left(\frac{\sqrt{\frac{1}{n^2} - a^2}}{r} \right)   + \pi}^{-\arcsin\left(\frac{b}{r}\right) + \pi} \; d\phi dr +
\]
\[
 \int_{|\left(d,b\right)|}^{\left|\left(d,\sqrt{\frac{1}{n^2}-a^2}\right)\right|} \int_{-\arcsin\left(\frac{\sqrt{\frac{1}{n^2} - a^2}   }{r} \right)+ \pi}^{\arccos\left(\frac{d}{r}\right)} \; d\phi dr.
\]

In both cases the occurring three double integrals can be evaluated similarly like $F_1(x,y)$ above. Summing together the respective three results  yields in both cases the same result:  a sum consisting of summands that are of one of the following three types:  binary products where one factor is  a logarithmic expression, binary products where one factor is an $\arcsin$- or $\arccos$-expression or binary products of a square root and $\pi$. By use of the appropriate transformation rules for arcus-expressions the two latter groups of binary products cancel each other out. Therefore, in both cases $A(x,y)$ equals
\begin{eqnarray*}
&&\frac{1}{2} \Bigg( b\left( \,\ln  \left(
\sqrt {{d}^{2}+{b}^{2}}+d \right) -\,\ln  \left( \sqrt {{d}^{2}+{b}^{2}}-d \right)  \right) + \\
& & d \Bigg(\,\ln  \left( \sqrt {{d}^{2}+{b}^{
2}}+b \right) -\,\ln  \left( \sqrt {{d}^{2}+{b}^{2}}-b \right)  + \\
& & \left. \,
\!\!\ln  \left( \sqrt {{\frac{1}{n^2}}-{a}^{2}+{d}^{2}} \!- \!\sqrt {{\frac{1}{n^2}}-{a}^{2}}
 \right) \!- \!\ln  \left( \sqrt {{\frac{1}{n^2}}-{a}^{2}+{d}^{2}}+\sqrt {{\frac{1}{n^2}}-{a}^{2}} \right) \right)  +  \\
& & \left.\sqrt {{\frac{1}{n^2}}-{a}^{2}}  \left( \ln  \left( \sqrt {{\frac{1}{n^2}}-
{a}^{2}+{d}^{2}}-d \right) - \ln  \left( \sqrt {
{\frac{1}{n^2}}-{a}^{2}+{d}^{2}}+d \right) \right) \right).
\end{eqnarray*}
The double integral $B(x,y)$ is evaluated in a completely analogous fashion.

The integration domains of $F_2$ and $F_5$ have a similar geometric structure. We give a short overview of the evaluation of $F_2$. As in the case of $F_3$  we split the integration domain along the $y$-axis in order to have less case distinction when transforming to polar coordinates:
\[
  F_2(x,y) := \int^{\sqrt{\frac{1}{n^2} - d^2}}_{ \sqrt{\frac{1}{n^2} - a^2}}\! \int^{0}_{d}
\frac{1}{\abs{(w, z)}}  \; dw dz  + \int^{\sqrt{\frac{1}{n^2} - d^2}}_{ \sqrt{\frac{1}{n^2} - a^2}}\! \int^{ \sqrt{\frac{1}{n^2} - z^2}}_{0}
\frac{1}{\abs{(w, z)}}  \; dy  dx.
\]
The first double integral has an axis-parallel rectangle domain located in the second quadrant and is treated like $A(x,y)$. The domain of the second double integral is the intersection of two shapes: an axis-parallel  rectangle domain located in the first quadrant whose lower right vertice $B$ lies on the circle $B(0,\frac{1}{n})$, and the circle $B(0,\frac{1}{n})$ itself. This means that by the intersection the right edge of the rectangle and parts of its top edge are exchanged with a circular arc  around zero with radius $\frac{1}{n}$. The double integral is translated to polar coordinates as follows:
\[
 \int_{\sqrt{\frac{1}{n^2} - a^2}}^{\sqrt{\frac{1}{n^2} - d^2}}  \int_{\arcsin\left(\frac{\sqrt{\frac{1}{n^2} - a^2}   }{r} \right)}^\frac{\pi}{2} \; d\phi dr +
\int_{\sqrt{\frac{1}{n^2} - d^2}}^\frac{1}{n} \int_{\arcsin\left(\frac{\sqrt{\frac{1}{n^2} - a^2}   }{r} \right)}^{\arcsin\left(\frac{\sqrt{\frac{1}{n^2} - d^2}   }{r}\right)} \; d\phi dr.
\]

For the evaluation of $F_5$ proceed as for $F_2$.
The only difference to $F_2$  lies in the fact that by the intersection with the circle $B(0, \frac{1}{n})$ also parts of the bottom line of the corresponding underlying rectangle are removed. In symbols this is reflected by exchanging every occurrence of the term $\sqrt{\frac{1}{n^2} - a^2}$ with $b$ in the double integral directly above.

 The integration of the functions $F_i$ with respect to $y$ can be executed in all cases by standard means. Note that the double integrals related to $F_1$, $F_2$ and $F_4$ do not depend on $y$ such that those integrations are mere multiplications of the respective functions with the difference between the limits of the respective integrals. After all the resulting functions of the variable $x$ have been summed up the fourth integration can be carried out yielding
 \[
  J_\frac{1}{2} = \sum_{i=1}^6 K_i = \sum_{i=1}^6 L_i = \frac{1}{3n^3} \quad \mathrm{and} \quad J_{i,j}^{i,j-1} = 2 J_\frac{1}{2} = \frac{2}{3n^3}.
\]
This solves the case of laterally adjacent squares

The case of diagonally adjacent squares is much simpler. For $2 \leq i,j \leq n$ we have to compute
\[
 J_{i,j}^{i-1,j-1} = \int_{\frac{i-1}{n}}^\frac{i}{n} \! \int_{\frac{j-1}{n}}^\frac{j}{n} \! \int \!\!\!\int_{S_{i-1,j-1}\left(x,y,\frac{1}{n}\right)} \!\!
\frac{1}{\abs{(x- w, y -z)}}  \; d(w,z)\, dy  dx.
\]
where
\[
 S_{i-1,j-1}\left(x,y,\frac{1}{n}\right) = \left(\left(\frac{i-2}{n},\frac{i-1}{n}\right) \times \left(\frac{j-2}{n},\frac{j-1}{n} \right) \right)   \cap B\left((x,y), \frac{1}{n}\right).
\]

An easy computation shows that the latter set is empty if and only if $|(a,b)| \geq \frac{1}{n}$. Therefore, for the computation of the inner double integral of $J_{i,j}^{i-1,j-1}$ we may restrict ourselves to points $(x,y) \in I_{i,j}$   that satisfy $|(a,b)| < \frac{1}{n}$. By  similar reasoning as in the lateral case we infer that $J_{i,j}^{i-1,j-1}$ equals
\[
 \int_{\frac{i-1}{n}}^\frac{i}{n} \! \int_{\frac{j-1}{n}}^{\frac{j-1}{n}+\sqrt{\frac{1}{n^2} - a^2}} \! \int_{y- \sqrt{\frac{1}{n^2} - a^2}}^{\frac{j-1}{n}} \int_{x- \sqrt{\frac{1}{n^2} - \left(z-y\right)^2}}^{\frac{i-1}{n}}
\frac{1}{\abs{(x- w, y -z)}}  \; dw dz  dy  dx.
\]

By applying the transformation $(w,z) \mapsto (x-w, y-z)$ this quadruple integral transforms to
\[
 \int_{\frac{i-1}{n}}^\frac{i}{n} \! \int_{\frac{j-1}{n}}^{\frac{j-1}{n}+\sqrt{\frac{1}{n^2} - a^2}} \! \int^{\sqrt{\frac{1}{n^2} - a^2}}_{b} \int^{\sqrt{\frac{1}{n^2} - z^2}}_{a}
\frac{1}{\abs{(w, z)}}  \; dw dz  dy  dx,
\]
and by changing the inner double integral to polar coordinates we get
\begin{eqnarray*}
 J_{i,j}^{i-1,j-1} & = & \int_{\frac{i-1}{n}}^\frac{i}{n} \! \int_{\frac{j-1}{n}}^{\frac{j-1}{n}+\sqrt{\frac{1}{n^2} - a^2}} \! \int^{\frac{1}{n}}_{|(a,b)|} \int^{\arccos\left(\frac{a}{r}\right)}_{\arcsin\left(\frac{b}{r}\right)}
\frac{1}{\abs{(w, z)}}  \; dw dz  dy  dx \\
& = & \frac{1}{6 n^3}.
\end{eqnarray*}
Altogether, by (\ref{result}) the final result is
\[
 \Reg{n}{1}(f) = \frac{1}{3 \pi n} \left(\sum_{i=2}^n\sum_{j=2}^n\left|a_{i,j} - a_{i-1,j-1}\right| + \sum_{i=1}^{n-1} \sum_{j =2}^{n} \left|a_{i,j} - a_{i+1,j-1}\right| \right)  +
\]
\[
 \frac{4}{3\pi n} \left(\sum_{i=1}^n\sum_{j=2}^n \left|a_{i,j} - a_{i,j-1}\right| + \sum_{i=2}^{n} \sum_{j =1}^{n} \left|a_{i,j} - a_{i-1,j}\right| \right).
\]

\item
With $f$ as above  we also evaluated $\Reg{n}{1}(f)$ with the kernel functions
\[
 \varphi_n(\vec{v}) := \frac{n^2}{4} \chi_{(-\frac{1}{n}, \frac{1}{n}) \times (-\frac{1}{n}, \frac{1}{n})}(\vec{v})
\]
for all $\vec{v}$ in $\R^2$. Note that these kernel functions
are not radial such that this case is not treated in the theory section.
The evaluation proceeds similar to the above one. As a result we get
\[
 \Reg{n}{1}(f) = \frac{1}{3} \frac{\sqrt{2} -1}{n} \left( \sum_{i=2}^n\sum_{j=2}^n \left|a_{i,j} - a_{i-1,j-1}\right| + \sum_{i=1}^{n-1} \sum_{j =2}^{n} \left|a_{i,j} - a_{i+1,j-1}\right| \right)+
\]
\[
 \frac{1}{12}\,{\frac {3\,\ln  \left( \sqrt {2} +1\right) -3\,\ln  \left(
\sqrt {2}-1 \right) - 2\,(\sqrt {2}-1)}{n}} \times
\]
\[
\left( \sum_{i=1}^n\sum_{j=2}^n \left|a_{i,j} - a_{i,j-1}\right| + \sum_{i=2}^{n} \sum_{j =1}^{n} \left|a_{i,j} - a_{i-1,j}\right| \right).
\]
\end{enumerate}

\section*{Acknowledgement}
The authors would like to express their gratitude to Paul F. X. M\"uller for introducing us to the recnt work on the new characterizations of Sobolev spaces and BV and some stimulating discussions. 
This work has been supported by the Austrian Science Fund (FWF) within the national research networks Industrial Geometry,
project 9203-N12, and Photo\-acoustic Imaging in Biology and Medicine, project S10505-N20.

\IfFileExists{../Bibliographie/Users_Guide.txt}{
  \def\BibPath{../Bibliographie/}}{
  \IfFileExists{../../Bibliographie/Users_Guide.txt}{
    \def\BibPath{../../Bibliographie/}}{
    \IfFileExists{../../../Bibliographie/Users_Guide.txt}{
      \def\BibPath{../../../Bibliographie/}}{
        \IfFileExists{../../../../Bibliographie/Users_Guide.txt}{
          \def\BibPath{../../../../Bibliographie/}}{
          \logmessage{Directory 'Bibliographie' not found}
          \def\BibPath{}}}}}

\bibliographystyle{\BibPath hyperref_plain}
\bibliography{\BibPath books,\BibPath proceedings,\BibPath articles,\BibPath accepted,\BibPath infmath,\BibPath inproceedings,\BibPath preprints,\BibPath theses,\BibPath websites}

\end{document}